\documentclass[12pt]{amsart}
\usepackage{amsmath}
\usepackage{amsfonts}
\usepackage{amsthm}
\usepackage{amssymb}
\usepackage{amscd}
\usepackage[all]{xy}
\usepackage{enumerate}
\usepackage{hyperref}
\usepackage{mathrsfs}
\usepackage{enumitem}

\textheight22truecm
\textwidth17truecm
\oddsidemargin-0.5truecm
\evensidemargin-0.5truecm

\keywords{} 

\subjclass[2010]{}

\pagestyle{myheadings}

\newcommand*{\ext}{\mathcal{E}\kern -.7pt xt}
\theoremstyle{plain}
\newtheorem{thm}{Theorem}[section]

\newtheorem{prop}[thm]{Proposition}

\newtheorem{cor}[thm]{Corollary}

\theoremstyle{definition}
\newtheorem{defn}[thm]{Definition}

\newtheorem*{ackn}{Acknowledgment}

\newtheorem{rmk}[thm]{Remark}
\newcommand{\sA}{\mathcal{A}}

\newcommand{\sC}{\mathcal{C}}

\newcommand{\sE}{\mathcal{E}}

\newcommand{\sO}{\mathcal{O}}
\newcommand{\sP}{\mathcal{P}}
\newcommand{\sQ}{\mathcal{Q}}

\newcommand{\sU}{\mathcal{U}}

\newcommand{\sW}{\mathcal{W}}

\newcommand{\mC}{\mathbb{C}}
\newcommand{\mD}{\mathbb{D}}

\newcommand{\bu}{\mathbf{u}}
\newcommand{\Ima}{\mathrm{Im}\,}


\usepackage{color}



\numberwithin{equation}{section}

\newcommand{\beba}  {\begin{equation}\begin{array}{rcl}}

\newcommand{\eaee}  {\end{array}\end{equation}}

\makeatletter
\def\l@section{\@tocline{1}{0pt}{1pc}{}{}}
\def\l@subsection{\@tocline{2}{0pt}{1pc}{4.6em}{}}
\def\l@subsubsection{\@tocline{3}{0pt}{1pc}{7.6em}{}}
\renewcommand{\tocsection}[3]{%
  \indentlabel{\@ifnotempty{#2}{\makebox[2.3em][l]{%
    \ignorespaces#1 #2.\hfill}}}#3}
\renewcommand{\tocsubsection}[3]{%
  \indentlabel{\@ifnotempty{#2}{\hspace*{2.3em}\makebox[2.3em][l]{%
    \ignorespaces#1 #2.\hfill}}}#3}
\renewcommand{\tocsubsubsection}[3]{%
  \indentlabel{\@ifnotempty{#2}{\hspace*{4.6em}\makebox[3em][l]{%
    \ignorespaces#1 #2.\hfill}}}#3}
\makeatother

\setcounter{tocdepth}{4}


\title{Curvature and relative volume forms}

\author{Luca Rizzi}
\address{Luca Rizzi\\Center for Geometry and Physics \\
	Institute for Basic Science (IBS)\\
	Pohang 37673\\ Korea,
	\texttt{lucarizzi@ibs.re.kr}}

\author{Francesco Zucconi}
\address{Francesco Zucconi\\Department of Mathematics, Computer Science and Physics \\
Universit\`a degli Studi di Udine\\
Udine, 33100\\ Italia
\texttt{Francesco.Zucconi@uniud.it}}

\begin{document}

\markboth{}{}

\begin{abstract}
	Using metric techniques introduced by Berndtsson, we show a result on constancy of families dominated by a constant variety and, on the opposite side, a result on the strong non isotriviality of certain families of surfaces with positive index. We also give metric interpretations of liftability of relative volume forms and of strong non isotriviality in terms of the complex conjugate of a suitable  representative of the Kodaira-Spencer class.
\end{abstract}
\maketitle

\section{Introduction}
In this paper we are concerned with the curvature properties of the higher direct images $R^if_*(\Omega^{n-i}_{X/B}(\log)\otimes L)$ and in particular of  $f_*(\omega_{X/B}\otimes L)$ where $f\colon X\to B$ is a semistable fibration of smooth complex projective varieties of relative
dimension $n$, $L$ is a Hermitian line bundle on $X$ and $\Omega^{\bullet}_{X/B}(\log)$ are the bundles of holomorphic relative logarithmic forms; see Section \ref{sez1} for the details. 

If $f\colon X\to B$ is smooth, it is classically known that $f_*\omega_{X/B}$ is Griffiths semi-positive, see  \cite{G2,gt}. If $B$ is a curve and $f$ is not necessarily smooth, a result by Fujita shows that $f_*\omega_{X/B}$ has a holomorphic decomposition $f_*\omega_{X/B}=\sU\oplus\sA$ where $\sA$ is an ample vector bundle and $\sU$ is a unitary flat vector bundle see: \cite{Fu,Fu2,CD1,CD2}. This result has been generalised in many directions; see, for example, \cite{CK} for the case $\dim B>1$, and \cite{I,LS} for the direct image of the relative pluricanonical bundles $f_*(\omega_{X/B}^{\otimes m})$.

Bo Berndtsson, in the seminal work \cite{bo}  shows that $f_*(\omega_{X/B}\otimes L)$ is semi-positive in the sense of Nakano if $L$ is semi-positive and this provides an evidence for  the Griffiths conjecture \cite{G}. Later in \cite{bo2}, it is shown that it is possible to relate the positivity of $f_*(\omega_{X/B}\otimes L)$ to the vanishing of the Kodaira-Spencer class. In particular, if $L=\sO_X$ these results highlight a close relation between curvature formulas and the classical infinitesimal Torelli problem. 

Motivated by these results, we study how to relate the complex analytic approach stemming from \cite{bo,bo2} to the theory of Massey products in the algebraic geometry context, cf. \cite{RZ4}. 

Massey products have been introduced in \cite{CP,PZ} and have been useful for solutions of Torelli-type problems; see: \cite{RZ1,RZ2,RZ3}. More importantly, they have been used in \cite{CP,RZ4} to study the unitary flat summand $\sU$ of $f_*\omega_{X/B}$ following an approach somewhat parallel to the metric one used by Berndtsson; see also \cite{BPW}.

For convenience we state our results in the case $\dim B=1$, nevertheless most of this work can be generalised to the case $\dim B>1$; see for example \cite{R} for the theory of Massey products in the case $\dim B>1$. 

The first result concerns a metric interpretation of the liftability of relative volume forms. To our best knowledge this is the first method to study the liftability of a Massey product that belongs to the ample part $\sA$ of the Fujita decomposition.

More precisely, let $\sW\subset f_*\omega_{X/B}$ be the standard subbundle generated via the Massey product technique, see Definition \ref{omegai}, and take on  $\sQ:= f_*\omega_{X/B}/\sW$ the quotient metric of the Hermitian metric defined in \cite{gt}. In Proposition \ref{formula} we give a formula computing the Chern curvature $\Theta_\sQ$  on the class $[\alpha]$, where $\alpha$  is a Massey product in $f_*\omega_{X/B}$.
From this explicit formula it becomes easy to show that the cup product of the Kodaira-Spencer class and a Massey product $\alpha$ in the ample part of the Fujita decomposition vanishes if and only if the curvature of $\sQ$ degenerates along $[\alpha]$.

\begin{thm}
	Let $\alpha$ be a Massey product in $\sA$. It is infinitesimally liftable 
	if and only if $\langle\Theta_\sQ [\alpha],[\alpha]\rangle_\sQ=0$.
\end{thm}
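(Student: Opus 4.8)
The plan is to deduce the equivalence directly from the explicit curvature formula of Proposition~\ref{formula}, combined with the standard cohomological description of infinitesimal liftability. Recall that, on a general fibre $X_b$, a Massey product $\alpha\in H^0(X_b,\omega_{X_b})$ is infinitesimally liftable precisely when the cup product $\xi\cup\alpha\in H^1(X_b,\Omega^{n-1}_{X_b})$ with the Kodaira--Spencer class $\xi\in H^1(X_b,T_{X_b})$ vanishes; this is the first-order obstruction to extending the relative holomorphic $n$-form off $X_b$, and it is the reformulation of liftability used by Berndtsson in \cite{bo2}. By Hodge theory on the smooth projective fibre $X_b$, the class $\xi\cup\alpha$ vanishes if and only if its harmonic representative $\mathbf{H}(\xi\cup\alpha)$ is zero in the $L^2$-metric attached to the polarisation, which is the metric of \cite{gt} on $f_*\omega_{X/B}$ inducing the quotient metric on $\sQ$.

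Now apply Proposition~\ref{formula}: it computes $\langle\Theta_\sQ[\alpha],[\alpha]\rangle_\sQ$ and exhibits it, up to a positive constant, as the $L^2$-norm over $X_b$ of the harmonic part of $\xi\cup\alpha$,
\[
\langle\Theta_\sQ[\alpha],[\alpha]\rangle_\sQ \;=\; c\,\big\|\mathbf{H}(\xi\cup\alpha)\big\|^2_{L^2(X_b)},\qquad c>0 .
\]
The hypothesis $\alpha\in\sA$ is what makes this identity clean. Passing from $f_*\omega_{X/B}$ to the quotient $\sQ=f_*\omega_{X/B}/\sW$ a priori introduces the non-negative second fundamental form of the inclusion $\sW\subset f_*\omega_{X/B}$ together with the boundary terms coming from the semistable, logarithmic structure; but for a Massey product lying in the ample summand of the Fujita decomposition, the construction of $\sW$ (Definition~\ref{omegai}) is precisely designed so that $\sW$ is generated by first-order liftable data, and this forces those extra contributions to be absorbed into $\sW$, leaving only the Berndtsson term modulo $\sW$. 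Since the right-hand side is a genuine squared norm---consistently non-negative, as it must be because $\sQ$ is a Hermitian quotient of the Nakano semi-positive bundle $f_*\omega_{X/B}$ of \cite{bo}---it vanishes if and only if $\mathbf{H}(\xi\cup\alpha)=0$. Chaining with the previous paragraph,
\[
\langle\Theta_\sQ[\alpha],[\alpha]\rangle_\sQ=0\ \Longleftrightarrow\ \mathbf{H}(\xi\cup\alpha)=0\ \Longleftrightarrow\ \xi\cup\alpha=0\ \Longleftrightarrow\ \alpha\ \text{infinitesimally liftable},
\]
which is the assertion.

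Granting Proposition~\ref{formula}, the deduction above is short; the substance is in that formula, and the step I expect to be the main obstacle is precisely the reduction to the single Berndtsson term, i.e.\ showing that the second fundamental form of $\sW$ and the logarithmic boundary contributions leave no residual positive quantity that could survive when $\xi\cup\alpha=0$. This is not a single hard estimate but a matching problem: inside Berndtsson's second-variation formula for $f_*(\omega_{X/B})$ of \cite{bo,bo2} one must identify the $\bar\partial$-exact and boundary pieces with the iterated Massey-product data defining $\sW$ in \cite{RZ4}, and verify that it is exactly the assumption $\alpha\in\sA$---rather than $\alpha\in\sU$, where the statement is vacuous because the flat summand has vanishing curvature and its sections are automatically liftable---that produces the required cancellation. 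Once that is in place the biconditional is immediate from the displayed identities.
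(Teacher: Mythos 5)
Your overall skeleton matches the paper's: both arguments rest on Proposition \ref{formula} and reduce the theorem to showing that the second summand $\sum_i\big|\int_{X_b}c_n\alpha_t\wedge\overline{P(\mu_i')}\big|^2$ vanishes, after which $\langle\Theta_\sQ[\alpha_t],[\alpha_t]\rangle_\sQ=\lVert\xi_t\cup\alpha_t\rVert^2$ and the equivalence with liftability is immediate. You also correctly flag this vanishing as the main obstacle. Note that one direction is free regardless: since both summands in Proposition \ref{formula} are non-negative, $\langle\Theta_\sQ[\alpha_t],[\alpha_t]\rangle_\sQ=0$ always forces $\xi_t\cup\alpha_t=0$. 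The gap is entirely in the converse.

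Your justification for the vanishing is not the right mechanism and does not close that gap. You attribute it to $\sW$ being ``generated by first-order liftable data'' together with logarithmic boundary contributions being absorbed; but there are no boundary terms here (the computation takes place over a disk $\Delta\subset B\setminus E$), and every $\eta_i\in K_\partial$ is liftable by the very definition of $K_\partial$ in (\ref{kdelta}), so liftability of the generators cannot be what kills the term --- if it were, the theorem would hold for arbitrary $\eta_1,\dots,\eta_{n+1}\in\Gamma(\Delta,K_\partial)$, which is neither claimed nor true in general. The actual hypothesis, made explicit in Corollary \ref{curvagg2} (of which the stated theorem is a summary), is that the $\eta_i$ are sections of $\mD^1\otimes\sO_B$; this forces the generators $\alpha_i$ of $\sW$ to lie in $\mD^n\otimes\sO_B=\sU$, the unitary flat summand. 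The vanishing then follows from two facts you never invoke: the Fujita decomposition $f_*\omega_{X/B}=\sU\oplus\sA$ is orthogonal, and $\sU$ is preserved by the Chern connection, so $(D\alpha_i')_t$ remains orthogonal to $\alpha_t\in\sA$ and each coefficient $\lambda_i=-\langle\alpha_t,(D\alpha_i')_t\rangle_\sE$ is zero. (In the companion Corollary \ref{curvagg}, where $\eta_i\in\Gamma(\Delta,\mD^1)$, one instead uses Proposition \ref{caratt}: $\mD^n$ is the kernel of $D$, so $P(\mu_i')=0$ outright.) Without restricting the $\eta_i$ in this way, a Massey product $\alpha\in\sA$ with $\xi_t\cup\alpha_t=0$ can still satisfy $\langle\Theta_\sQ[\alpha_t],[\alpha_t]\rangle_\sQ>0$, so the ``only if'' direction of your chain of equivalences is unsupported as written.
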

For more detailed statements see Corollaries \ref{curvagg} and \ref{curvagg2}.

One key property of this kind of metric results is that they can be meaningfully extended to the whole base $B$ using the notion of singular Hermitian metric, see \cite{P,raufi} and references therein for all the definitions.
Using a curvature formula from \cite{bo2}, we give conditions for the isotriviality of $f\colon X\to B$ over a Zariski open subset of $B$:
\begin{thm}\label{XYth}
Let $f\colon X\to B$ be a family of canonically polarised manifolds. Assume that there exists a surjective morphism $\rho\colon V\times B\to X$, where $V$ is a projective variety, such that $p:=f\circ \rho$ is the projection on $B$. Then $X$ is Zariski locally trivial.
\end{thm}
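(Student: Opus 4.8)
The strategy I would follow is to apply the curvature formula of \cite{bo2} to $f_{*}(\omega_{X/B}\otimes L)$ with $L=\omega_{X/B}^{\otimes(m-1)}$, $m\gg 0$, i.e. to $E:=f_{*}\omega_{X/B}^{\otimes m}$, and to use the hypothesis to force the curvature of $E$, in its natural positively curved (Narasimhan--Simha) metric, to vanish; by the formula this kills the Kodaira--Spencer map and $f$ is isotrivial. The first move is a reduction to the case in which $\rho$ is generically finite: after replacing $V$ by a resolution of singularities and then by a general complete intersection subvariety of dimension $n=\dim X_{b}$, the maps $\rho_{b}:=\rho(\,\cdot\,,b)\colon V\to X_{b}$ remain surjective for $b$ in a dense Zariski open $B_{0}\subseteq B$, since a general complete intersection of codimension $\dim V-n$ meets every fibre of $\rho_{b}$. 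Shrinking $B_{0}$ we may also assume that $f$ is smooth over $B_{0}$ with canonically polarised fibres, that $\omega_{X_{b}}^{\otimes m}$ is very ample, and that the generic degree $\deg\rho_{b}=d$ is independent of $b\in B_{0}$.

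The heart of the argument is to realise the natural metric on $E$ over $B_{0}$ as the restriction of a \emph{flat} metric. Since $\rho_{b}$ is a surjective generically finite morphism of smooth projective $n$-folds, pullback of ($m$-th powers of) top holomorphic forms gives, for each $b$, an injection $\rho_{b}^{*}\colon H^{0}(X_{b},\omega_{X_{b}}^{\otimes m})\hookrightarrow H^{0}(V,\omega_{V}^{\otimes m})$, and — $\rho$ being a morphism over $B_{0}$ — these assemble into a holomorphic embedding of vector bundles $E\hookrightarrow \mathcal O_{B_{0}}\otimes_{\mathbb C}H^{0}(V,\omega_{V}^{\otimes m})$. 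The target is a \emph{trivial} bundle over the fixed vector space $H^{0}(V,\omega_{V}^{\otimes m})$, which I equip with the constant metric built from the $L^{2/m}$-pairing on $V$; since $\rho_{b}$ is a local biholomorphism off its ramification divisor, for every $s\in H^{0}(X_{b},\omega_{X_{b}}^{\otimes m})$ one has $\int_{V}|\rho_{b}^{*}s|^{2/m}=\int_{V}\rho_{b}^{*}\bigl(|s|^{2/m}\bigr)=d\int_{X_{b}}|s|^{2/m}$, so the metric induced on $E$ from the target equals $d$ times its natural fibre-integral metric. Hence that metric is, up to the constant $d$, the restriction of a flat metric on a trivial bundle. (For $m=1$ this is the usual $L^{2}$-Hodge metric, and the argument runs entirely with genuine Hermitian metrics; for $m\ge 2$ one uses the Narasimhan--Simha/relative Bergman metric, which is precisely the metric for which the positivity results of \cite{bo,bo2} hold.)

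Now combine: on passing to a holomorphic subbundle of a flat bundle the curvature can only decrease, so $E$ with the induced metric has Chern curvature $\leq 0$ in the sense of Griffiths; while by \cite{bo,bo2} the bundle $E$ is Nakano — hence Griffiths — semipositive. Therefore the curvature of $E$ vanishes identically on $B_{0}$. Plugging $i\Theta_{E}\equiv 0$ into the curvature formula of \cite{bo2} for $f_{*}(\omega_{X/B}\otimes\omega_{X/B}^{\otimes(m-1)})$, the obstruction term — the cup product with the Kodaira--Spencer class $\kappa_{b}\colon H^{0}(X_{b},\omega_{X_{b}}^{\otimes m})\to H^{1}(X_{b},\Omega^{n-1}_{X_{b}}\otimes\omega_{X_{b}}^{\otimes(m-1)})$ — must vanish for every $b\in B_{0}$; since $\omega_{X_{b}}^{\otimes m}$ is very ample for $m\gg 0$, this forces $\kappa_{b}=0$. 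Thus $f$ has identically zero Kodaira--Spencer map over $B_{0}$, and since its fibres are canonically polarised (separated coarse moduli, finite automorphism groups) the associated moduli map of $B_{0}$ is constant: all fibres of $f$ over $B_{0}$ are isomorphic, i.e. $f$ is isotrivial, which is the asserted Zariski local triviality. (The same input $\deg\det E=0$ for every $m$, together with the structure theory of moduli of canonically polarised manifolds, or the finiteness of dominant maps from the fixed $V$ onto varieties of general type, gives isotriviality directly as well.)

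The step I expect to be the main obstacle is the metric realisation in the second paragraph: converting the purely geometric domination hypothesis into the analytic statement that the natural positively curved metric on $f_{*}\omega_{X/B}^{\otimes m}$ is the restriction of a flat metric. This is the only place the hypothesis is used, and it rests on the product structure of $V\times B$ (which makes $\omega_{(V\times B)/B}^{\otimes m}=pr_{V}^{*}\omega_{V}^{\otimes m}$ constant along $B$), on the constancy of $\deg\rho_{b}$, and on the compatibility of the relevant fibre volumes under generically finite pullback. Once this is in place, the passage to vanishing of the curvature, then of the Kodaira--Spencer class, and then to isotriviality is routine given \cite{bo,bo2} and the standard theory; a subsidiary but genuinely needed technical point is the Bertini-type reduction to $\rho$ generically finite while preserving fibrewise surjectivity over a dense open of $B$.
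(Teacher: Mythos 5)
Your overall strategy --- embed $f_*(\omega_{X/B}^{\otimes m})$ into the trivial bundle $H^0(V,(\Omega^n_V)^{\otimes m})\otimes\sO_B$ supplied by the product structure, deduce that the curvature of its natural semi-positively curved metric vanishes, and then invoke Berndtsson's curvature formula for $f_*(\omega_{X/B}\otimes\omega_{X/B}^{\otimes(m-1)})$ to kill the Kodaira--Spencer class --- is exactly the paper's. But the central metric step of your second paragraph does not work as stated for $m\geq 2$: the functional $s\mapsto\int_{V}|s|^{2/m}$ cannot be polarised, so the ``constant metric built from the $L^{2/m}$-pairing'' is not a Hermitian metric on the trivial bundle, and the principle that curvature decreases in holomorphic subbundles does not apply to it. If you instead put the genuine Narasimhan--Simha Hermitian metric $\int_{V'}s\,\bar t\,e^{-(m-1)\phi_{V'}}$ on $H^0(V',\omega_{V'}^{\otimes m})$, the metric it induces on $E$ via $\rho_b^*$ is \emph{not} $d$ times the natural one on $E$, because the weight $\phi_{V'}$ is not the pullback of the corresponding weight on $X_b$; so the identity you rely on fails precisely in the Hermitian category where you need it. The paper avoids this entirely: the Catanese--Fujita--Kawamata decomposition $f_*(\omega_{X/B}^{\otimes m})=\sU\oplus\sA$ together with the injection into a trivial bundle forces $\sA=0$ and $\sU\cong\sO_B^l$ (an ample bundle on a curve admits no nonzero map to a trivial bundle), and a holomorphically trivial bundle carrying a semi-positively curved singular Hermitian metric (P\u{a}un--Takayama) has identically vanishing curvature by \cite[Lemma 13.2]{HPS}. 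With that route your Bertini-type reduction to generically finite $\rho$ becomes unnecessary; if you want to keep the direct metric comparison you must replace the $L^{2/m}$ device by a degree or holomorphic-triviality argument.

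The second gap is at the end. Vanishing of the Kodaira--Spencer class over $B_0$ plus finiteness of automorphisms gives isotriviality, but isotriviality is \emph{not} the same as Zariski local triviality: a priori the family is only trivialised after a finite \'etale cover of $B_0$. The paper needs two further inputs here: Berndtsson's observation that, when the curvature is not strictly positive, the horizontal lift $V_\psi$ is actually a holomorphic vector field (whence analytic local triviality), and then the descent argument of \cite[Page 22]{XY}, which uses the domination by $V\times B$ a second time to upgrade isotriviality to triviality over a Zariski open subset. Your closing ``i.e.'' elides exactly this step.
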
 
See Theorem \ref{canpol}. This result must be seen in the light of the \lq constancy\rq\ results that are at the hearth of the recent solution, in some cases, of the Bombieri-Lang conjecture: we think that Theorem \ref{XYth} gives an interesting link between metric estimates and the techniques used in the proof of \cite[Subsection 2.3 and Theorem 2.6]{XY}. 

On the opposite side, our next result is a new method to study strong non isotriviality based on metric evaluations on the conjugate of the Kodaira-Spencer class. It arises from the study of the curvature of certain torsion free subsheaves of  $K^i_{\textnormal{prim}}\otimes \omega_B(E)^{\otimes i}$, where $K^i_{\text{prim}}$ is the sheaf of primitive classes in the kernel of $R^if_*\Omega^{n-i}_{X/B}(\log)\to \omega_B(E)\otimes R^{i+1}f_*\Omega^{n-i-1}_{X/B}(\log)$ and $E$ is the divisor on $B$ of the $f$-singular values.

Following \cite{Kov}, we give the following definition:

\begin{defn}
A semistable fibration $f\colon X\to B$ is said to be strongly non isotrivial if the morphism
$$
\rho_f\colon T_B(-E)^{\otimes n}\to R^nf_*\wedge^n T_{X/B}(-\log )
$$
given by the iteration of the Kodaira-Spencer map is not trivial.
\end{defn}

Actually we study the behaviour of the morphisms 
	$$\varphi_i\colon f_*\omega_{X/B}\otimes T_B(-E)^{\otimes i}\to R^if_*\Omega^{n-i}_{X/B}(\log)\quad i=1,\dots,n$$ 
	induced by iteration of the contraction with the Kodaira-Spencer class.
In particular if $\varphi_n$ is not zero then $f$ is strongly non isotrivial.
%

We denote by $k_t$ the representative of the Kodaira-Spencer class $\xi_t$ coming from the so called horizontal lift of the local vector field $\frac{\partial}{\partial t}$ on $B$; see \cite{Siu,Sc,BPW} and Section \ref{sez1} for details. We denote a local section of $R^if_*\Omega^{n-i}_{X/B}(\log)$ by $[u]$; it can be seen as a function that maps a point $t\in B$ to a cohomology class $[u_t]$ of $(n-i,i)$-forms on the corresponding fiber $X_t$. We denote by $u_{t,h}\in [u_t]$ the harmonic representative of $[u_t]$ and by $c(\psi)$ the curvature  of a smooth metric on $\omega_B(E)$  of local weight $\psi$. We prove:

\begin{thm}\label{terzo}
If for every section  $[u]$ of $\varphi^i(\sA\otimes T_B(-E)^{\otimes i})\cap K^i_{\textnormal{prim}}$, $i=1,\dots,n-1$, the following inequality holds over $B\setminus E$ 
$$\lVert(\bar{k}_t\cup u_t)_h\lVert^2\geq ic(\psi)\lVert [u_t]\lVert^2$$
then either $f_*\omega_{X/B}$ is unitary flat or $f\colon X\to B$ is strongly non isotrivial.
\end{thm}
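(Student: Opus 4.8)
The plan is to argue by contraposition: assume that $f_*\omega_{X/B}$ is \emph{not} unitary flat and that $f$ is \emph{not} strongly non isotrivial, and deduce that the displayed inequality must fail for some admissible section. Write the Fujita decomposition $f_*\omega_{X/B}=\sU\oplus\sA$ with $\sU$ unitary flat and $\sA$ ample (\cite{Fu,CD1,gt}); ``not unitary flat'' means exactly $\sA\neq 0$. The maps $\varphi_i$ are iterated cup products with the Kodaira--Spencer class, i.e.\ iterations of the Higgs field $\theta$ of the underlying variation of Hodge structure; since flat sections are annihilated by $\theta$ one has $\varphi_i|_\sU=0$, so $\varphi_i(f_*\omega_{X/B}\otimes T_B(-E)^{\otimes i})=\varphi_i(\sA\otimes T_B(-E)^{\otimes i})$. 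If $\varphi_1=0$ then $\sA\subseteq\ker\theta$; but $\ker\theta$, being a saturated subsheaf of $f_*\omega_{X/B}$ on which the Higgs field vanishes, is a unitary flat subbundle, hence contained in $\sU$, whence $\sA=0$, which is excluded. On the other hand ``$f$ not strongly non isotrivial'' gives $\varphi_n=0$ by the implication recalled above. Hence $i_0:=\max\{\,i:\varphi_i\neq 0\,\}$ is well defined with $1\le i_0\le n-1$, and $\varphi_j\neq 0$ precisely for $0\le j\le i_0$ (once $\varphi_j$ vanishes, so does $\varphi_{j+1}$).

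Next I would package the iterated images into short exact complexes. For $0\le j\le i_0$ set $\sG_j:=\varphi_j(\sA\otimes T_B(-E)^{\otimes j})\subseteq R^jf_*\Omega^{n-j}_{X/B}(\log)$ (so $\sG_0=\sA$) and $\sK_j:=\ker\big(\theta\colon\sG_j\to R^{j+1}f_*\Omega^{n-j-1}_{X/B}(\log)\otimes\Omega^1_B(\log E)\big)$. Since $\theta$ raises cohomological degree inside $H^n$ and $H^{n,0}$ is primitive, every $\sG_j$ consists of primitive classes, so $\sK_j=\sG_j\cap K^j_{\textnormal{prim}}$ is exactly the sheaf appearing in the hypothesis at level $j$. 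The key positivity input is that the twist $\widetilde{\sG}_j:=\sG_j\otimes\omega_B(E)^{\otimes j}=\varphi_j(\sA)$ is, for $1\le j\le i_0$, a \emph{nonzero quotient of the ample bundle $\sA$}, hence ample, hence $\deg\widetilde{\sG}_j>0$; moreover $\widetilde{\sG}_1\cong\sA$ because $\varphi_1|_\sA$ is injective ($\sA\cap\ker\theta=0$). Tensoring $\theta$ by $\omega_B(E)^{\otimes j}$ produces short exact sequences
\[
0\longrightarrow\sK_j\otimes\omega_B(E)^{\otimes j}\longrightarrow\widetilde{\sG}_j\xrightarrow{\ \theta\ }\widetilde{\sG}_{j+1}\longrightarrow 0\qquad(1\le j\le i_0),
\]
with $\widetilde{\sG}_{i_0+1}=0$. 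Adding degrees and telescoping gives
\[
\deg\sA=\sum_{j=1}^{i_0}\deg\big(\sK_j\otimes\omega_B(E)^{\otimes j}\big)=\sum_{j=1}^{i_0}\Big(\deg\sK_j+j\,\rank(\sK_j)\deg\omega_B(E)\Big).
\]

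The core is a curvature estimate for $\sK_j$ inside the Hodge bundle $K^j_{\textnormal{prim}}$. Using the relative volume form computations of \cite{bo2} together with Griffiths' curvature formula for the Hodge bundles of a polarised variation of Hodge structure, the Chern curvature of $R^jf_*\Omega^{n-j}_{X/B}(\log)$ with its Hodge metric, evaluated over $B\setminus E$ on a harmonic representative $u_{t,h}$ of a class $[u_t]\in K^j_{\textnormal{prim}}$, equals $-\lVert(\bar{k}_t\cup u_t)_h\rVert^2$ in the normalization of the statement: the ``outgoing'' contribution $\lVert k_t\cup u_t\rVert^2$ vanishes on $\ker\theta$, so only the conjugate Kodaira--Spencer term survives. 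Therefore the Chern curvature of $K^j_{\textnormal{prim}}\otimes\omega_B(E)^{\otimes j}$ along $\sK_j$ is bounded above by $-\lVert(\bar{k}_t\cup u_t)_h\rVert^2+j\,c(\psi)\lVert[u_t]\rVert^2$, which the hypothesis at level $i=j$ makes $\le 0$. Since $\sK_j\otimes\omega_B(E)^{\otimes j}$ is a subsheaf of that bundle, the curvature--decreasing property of subsheaves yields $\deg\big(\sK_j\otimes\omega_B(E)^{\otimes j}\big)\le 0$ for every $j$; substituting into the telescoped identity forces $\deg\sA\le 0$, contradicting the ampleness of $\sA$. This contradiction proves the theorem.

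I expect the curvature step to be the main obstacle. One must first set up the curvature formula for $K^j_{\textnormal{prim}}$ through the relative volume form techniques of \cite{bo2} (this is what makes the \emph{conjugate} class $\bar{k}_t$ appear), rather than merely quoting Griffiths. Then one has to control everything across the divisor $E$: since $f$ is semistable the local monodromy is unipotent, so the Hodge bundles extend over $E$ as $R^jf_*\Omega^{n-j}_{X/B}(\log)$ and the $L^2$/Hodge metric extends as a singular Hermitian metric with at worst logarithmic growth, and one must check that ``$\deg$ equals the integral over $B\setminus E$ of the trace of the curvature'' and that the curvature--decreasing inequality persist in this singular setting. Finally, some care is needed so that the telescoping identity for $\deg\sA$ holds without spurious torsion contributions, i.e.\ taking $\sG_j$, $\sK_j$ and the quotients $\widetilde{\sG}_{j+1}$ as saturated subsheaves of the locally free logarithmic Hodge bundles.
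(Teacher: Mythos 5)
Your core argument coincides with the paper's: the curvature identity $\langle\Theta_i([u_t]\otimes l_t),[u_t]\otimes l_t\rangle=-\lVert(\bar k_t\cup u_t)_h\lVert^2+ic(\psi)\lVert[u_t]\lVert^2$ on sections of $K^i_{\textnormal{prim}}\otimes\omega_B(E)^{\otimes i}$, the hypothesis converting it into semi-negativity of a (singular, extended across $E$ via \cite{BPW}) Hermitian metric on $\phi_i(\sA)\cap(K^i_{\textnormal{prim}}\otimes\omega_B(E)^{\otimes i})$, and the clash with the ampleness of $\sA$. The only real difference is cosmetic: the paper runs an induction ($\phi_i(\sA)$, being a nonzero quotient of the ample $\sA$, cannot be contained in a semi-negatively curved bundle, hence $\phi_{i+1}(\sA)\neq0$ up to $i+1=n$), whereas you telescope degrees along the short exact sequences $0\to\sK_j\otimes\omega_B(E)^{\otimes j}\to\widetilde{\sG}_j\to\widetilde{\sG}_{j+1}\to0$; these are equivalent packagings of the same mechanism.

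Two auxiliary claims in your write-up are, however, false as stated. First, $\ker\theta=K^0$ is \emph{not} in general a unitary flat subbundle contained in $\sU$: that the $\rank$ of $K^0$ can strictly exceed the $\rank$ of $\sU$ is precisely the phenomenon studied in \cite{GST,GT}, and Proposition \ref{caratt} records that $K^0$ is a direct summand only when $K^0=\sU$. Second, and consequently, $\sA\cap K^0$ need not vanish (a subsheaf of an ample bundle on a curve may well have non-positive degree), so $\varphi_1|_{\sA}$ need not be injective and $\widetilde{\sG}_1\cong\sA$ is unjustified. Both points are repairable without changing your strategy: $\sA\not\subseteq K^0$ follows because $K^0$ is semi-negatively curved, so all its subsheaves have degree $\le0$ while $\deg\sA>0$; and the telescoping should terminate at $\deg\widetilde{\sG}_1\le0$, which already contradicts the fact that $\widetilde{\sG}_1=\phi_1(\sA)$ is a nonzero quotient of the ample bundle $\sA$ and hence of positive degree. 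With these two corrections your proof is sound and matches the paper's.
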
 See Theorem \ref{ineqk}.
Theorem \ref{terzo} must be put in correspondence to the semi-negativity results given in \cite{Z} and \cite{BPW}. 

Thanks to this new metric point of view we obtain a somewhat surprising result on surfaces $S$ of general type with positive index $\tau:=\frac{1}{3}(c_1^2(S)-2c_2(S))$, where $c_i(S)$, $i=1,2$, are the Chern classes of  $S$. 
Algebraic surfaces with $\tau>0$, even in the non simply connected case, have been studied; we can quote here \cite{In} for some examples and \cite{LY} for interesting geometrical properties.

\begin{thm}\label{indicepositivointro}
Let $f\colon X\to B$ be a semistable fibration and denote by $r$ the rank of $K^0_\textnormal{prim}$. Assume that the general fiber $X_t$ is a surface satisfying $K^2_{X_t}> 8\chi(\sO_{X_t})+2r+1$, then $f$ is strongly non isotrivial.
\end{thm}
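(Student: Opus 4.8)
The plan is to argue by contradiction. Since the general fibre $X_t$ is a surface the relative dimension is $n=2$, and, as recalled just before the statement, $\varphi_n\neq 0$ already forces strong non isotriviality; so it is enough to show that $\varphi_2\neq 0$, and I shall assume the contrary. A morphism of coherent sheaves on the curve $B$ vanishes precisely when it vanishes at the generic point, so I would work over a general $t\in B\setminus E$, where $X_t$ is smooth, the fibre of $f_*\omega_{X/B}$ is $H^0(X_t,\omega_{X_t})=H^{2,0}(X_t)$, and $\varphi_2$ is, after the harmless trivialisation of $T_B(-E)^{\otimes 2}$, the twofold iteration of the Kodaira--Spencer contraction. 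Hence $\varphi_2=0$ reads
$$\xi_t\lrcorner\big(\xi_t\lrcorner\omega\big)=0\in H^2(X_t,\sO_{X_t})\qquad\text{for every }\omega\in H^{2,0}(X_t).$$

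First I would convert this vanishing into an isotropy statement. By Serre duality the pairing $H^{2,0}(X_t)\times H^2(X_t,\sO_{X_t})\to H^2(X_t,\omega_{X_t})\cong\C$ is perfect, and an elementary cup-product manipulation (using that $\xi_t\lrcorner(-)$ is a derivation for the wedge product and that $\Omega^3_{X_t}=0$) identifies, up to sign, $\langle\omega',\xi_t\lrcorner\xi_t\lrcorner\omega\rangle$ with $\int_{X_t}(\xi_t\lrcorner\omega')\wedge(\xi_t\lrcorner\omega)$, i.e.\ with the intersection pairing on $H^{1,1}(X_t)$ of the classes $\xi_t\lrcorner\omega'$ and $\xi_t\lrcorner\omega$. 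Thus $\varphi_2=0$ is equivalent to the subspace $W_t:=\xi_t\lrcorner H^{2,0}(X_t)\subseteq H^{1,1}(X_t)$ being totally isotropic for the intersection form of $X_t$. Next I would note that $W_t$ lies in the \emph{primitive} part: fixing a relatively ample line bundle $\sL$ on $X$, the class $h\colon t\mapsto c_1(\sL|_{X_t})$ is a flat section of $R^2f_*\C\otimes\sO_B$ of type $(1,1)$; since a $(1,1)$-class wedged with a $(2,0)$-class vanishes on a surface, $t\mapsto\langle h,\omega(t)\rangle$ is identically zero for every local section $\omega$ of $f_*\omega_{X/B}$, and differentiating it with the Gauss--Manin connection (using $\nabla h=0$, Griffiths transversality, and the same $(3,1)$-vanishing) gives $\langle h,\xi_t\lrcorner\omega\rangle=0$, so $W_t\subseteq H^{1,1}_{\mathrm{prim}}(X_t)$.

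Then comes the numerical part. By the Hodge index theorem the intersection form on $H^{1,1}(X_t,\R)$ has signature $(1,h^{1,1}-1)$; as $\sL|_{X_t}$ is ample, $h$ spans a positive line, so the form on the $(h^{1,1}-1)$-dimensional real primitive subspace is negative definite. A complex subspace $W$ totally isotropic for the $\C$-bilinear extension of a negative definite real quadratic form $Q$ satisfies $W\cap\overline W=0$ (for $w$ in the intersection one has $\overline w\in W$ too, and then the positive definite Hermitian form $-Q(\,\cdot\,,\overline{\,\cdot\,})$ vanishes at $w$), so $2\dim_\C W$ does not exceed the real dimension of the ambient space; here this gives $\dim_\C W_t\le\tfrac12(h^{1,1}-1)$. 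On the other hand, over a general $t$ the kernel sheaf defining $K^0_{\mathrm{prim}}$ has fibre $\ker\big(\xi_t\lrcorner\colon H^{2,0}(X_t)\to H^{1,1}(X_t)\big)$ — all of $H^{2,0}(X_t)$ being Lefschetz primitive — so $\dim_\C W_t=p_g-r$ with $p_g=\dim H^{2,0}(X_t)$, whence $p_g-r\le\tfrac12(h^{1,1}-1)$. Finally, Noether's formula $\chi(\sO_{X_t})=\tfrac1{12}(K^2_{X_t}+c_2(X_t))$ together with Hirzebruch's signature theorem $\tau=\tfrac13(K^2_{X_t}-2c_2(X_t))$ gives $K^2_{X_t}=8\chi(\sO_{X_t})+\tau$, while the Hodge decomposition of the surface gives $\tau=2+2p_g-h^{1,1}$; substituting, the inequality $p_g-r\le\tfrac12(h^{1,1}-1)$ is exactly $\tau\le 2r+1$, that is $K^2_{X_t}\le 8\chi(\sO_{X_t})+2r+1$, contradicting the hypothesis. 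Therefore $\varphi_2\neq 0$ and $f$ is strongly non isotrivial.

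I expect the main obstacle to lie not in any single estimate but in making the dictionary watertight: one must check that the edge morphism whose kernel defines $K^0_{\mathrm{prim}}$ restricts, on a general (smooth) fibre, to the Kodaira--Spencer cup product, so that $r$ genuinely equals the nullity of $\xi_t\lrcorner$ on $H^{2,0}(X_t)$, and that $\varphi_2$ is truly the twofold iterate of the contraction, so that the Serre-duality reformulation is legitimate; the logarithmic/semistable decorations disappear at the general point and cause no trouble there, but one should keep in mind that strong non isotriviality, being the non-vanishing of a sheaf morphism on $B$, is tested at the generic point, which is where all the fibrewise Hodge theory above lives. One could alternatively derive the estimate from the curvature formula underlying Theorem~\ref{terzo}, observing that a large $K^2_{X_t}$ forces the image of $\varphi_1$ on the ample summand $\sA$ of the Fujita decomposition to be too large to land in a totally isotropic subspace; but the purely Hodge-theoretic route above seems the most direct.
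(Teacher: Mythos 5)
Your argument is correct and is essentially the paper's: both reduce to showing that if the twofold Kodaira--Spencer iterate vanished at a general $t$, the image $W_t=\xi_t(H^{2,0}(X_t))\subseteq H^{1,1}_{\mathrm{prim}}(X_t)$ would satisfy $W_t\cap\overline{W_t}=0$, forcing $2(p_g-r)\le h^{1,1}-1$, which Noether's formula turns into $K^2_{X_t}\le 8\chi(\sO_{X_t})+2r+1$. The only real difference is how that key step is justified: you derive $W_t\cap\overline{W_t}=0$ from total isotropy for the intersection form plus the Hodge index theorem, whereas the paper (Propositions \ref{adjoint} and \ref{inters}) gets it from the adjointness of $k_t\cup$ and $\bar k_t\cup$ with respect to the Hodge metric --- two formulations of the same fact on primitive $(1,1)$-classes.
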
 See Theorem \ref{indicepositivo}. This Theorem confirms that conditions for strong non isotriviality are actually related to upper bounds on the dimension of the vector subspace of liftable top forms, see Remark \ref{upperb}.
\begin{ackn}
	The first author has been supported by the Institute for Basic Science (IBS-R003-D1) and by European Union funds, NextGenerationEU. The second author has been supported by the grant DIMA Geometry PRIDZUCC, by PRIN 2017 Prot. 2017JTLHJR \lq\lq Geometric, algebraic and analytic methods in arithmetics\rq\rq and by the project DM737 RIC COLLAB ZUCCONI \lq\lq Birational geometry, Fano manifolds and torus actions\rq\rq CUP G25F21003390007, European Union funds, NextGenerationEU.
	
	The authors are members INdAM-GNSAGA.
\end{ackn}

\section{Setting}
\label{sez1}
Let $X$ be a smooth complex projective $n+1$-dimensional variety and $B$ a smooth complex projective curve.
In this paper we consider semistable fibrations $f\colon X\to B$, that is $f$ is a proper surjective morphism with connected fibers $X_t:=f^{-1}(t)$, $t\in B$, and such that the singular fibers are reduced and normal crossing. We remark that, thanks to the Semistable reduction theorem, see for example \cite{KKMSD}, we can always reduce to this case up to a sequence of blow-ups of $X$ and cyclic Galois coverings of $B$.

 We denote by $E\subset B$ the divisor of singular values of $f$ and by $S$ the (support of the) inverse image $f^{-1}(E)$.

In this Section, we give a characterisation of some of the key concepts of \cite{RZ4,RZ5,R} by means of their metric properties.

\subsection{Preliminary definitions}
We begin by recalling the short exact sequence 
\begin{equation}
\label{seq}
0\to f^*\omega_B\to \Omega^1_X\to \Omega^1_{X/B}\to 0
\end{equation} which defines the sheaf of holomorphic {relative differentials} $\Omega^1_{X/B}$. This sheaf in general is not locally free and for this reason it is often convenient to consider the logarithmic version of (\ref{seq}):
\begin{equation}
\label{seqlog}
0\to f^*\omega_B(E)\to \Omega^1_X(\log S)\to \Omega^1_{X/B}(\log)\to 0.
\end{equation}
We briefly recall that if $S$ is locally given by $z_1z_2\cdots z_k=0$ in appropriate local coordinates, the sheaf $\Omega^1_X(\log S)$ of {logarithmic differentials} is the locally free $\sO_X$-module generated by $dz_1/z_1,\ldots,dz_k/z_k,dz_{k+1},\ldots,dz_{n+1}$.

In the case of a semistable fibration, (\ref{seqlog}) is an exact sequence of vector bundles and 
its $p$-wedge product, for $p=1,\dots, n$, is
\begin{equation}
\label{seqlogp}
0\to f^*\omega_B(E)\otimes \Omega^{p-1}_{X/B}(\log)\to \Omega^p_X(\log S)\to \Omega^p_{X/B}(\log)\to 0
\end{equation}  and  the determinant sheaf of $\Omega^1_{X/B}(\log)$ is 
$$
\det(\Omega^1_{X/B}(\log))=\det(\Omega^1_X(\log S))\otimes f^*\omega_B(E)^\vee=\omega_X(S)\otimes f^*\omega_B(E)^\vee=\omega_X\otimes f^*\omega_B^\vee=:\omega_{X/B}
$$ the {relative dualizing sheaf} of $f\colon X\to B$.

The pushforward via $f$ of Sequence (\ref{seqlogp}) is a long exact sequence of locally free sheaves on $B$, see \cite[Theorem 2.11]{S} cf. \cite[Lemma 2.11]{K},
and we give the following definition. 
\begin{defn}
\label{defk}
We call $K^i$ the kernel of the connecting morphism $$R^if_*\Omega^{n-i}_{X/B}(\log)\to \omega_B(E)\otimes R^{i+1}f_*\Omega^{n-i-1}_{X/B}(\log).$$ 
\end{defn}
\begin{rmk}\label{remarkks}
At general point $t\in B$, the fiber of $K^i$ is the kernel of the homomorphism 
$$
H^i(X_t, \Omega^{n-i}_{X_t})\to T_{B,t}^\vee\otimes H^{i+1}(X_t, \Omega^{n-i-1}_{X_t})
$$
given by the cup product with the Kodaira-Spencer class $\xi_t\in H^1(X_t, T_{X_t})$.
\end{rmk}


Following \cite{RZ4}, we also associate to $f\colon X\to B$ a local system of vector spaces on $B$, that is a sheaf on $B$ which is locally isomorphic to a constant sheaf. Denote by $\Omega^1_{X,d}$  the sheaf of de Rham closed holomorphic $1$-forms on $X$, hence $\Omega^1_{X,d}\subset\Omega^1_{X}(\log S)$ and we consider the composition
$$
f_*\Omega^1_{X,d}\hookrightarrow f_*\Omega^1_{X}(\log S)\to f_*\Omega^1_{X/B}(\log).
$$ The image sheaf of $f_*\Omega_{X,d}^{1}\to f_*\Omega^1_{X/B}(\log)$ is a local systems, see \cite[Lemma 2.6]{RZ4}.
\begin{defn}
	\label{defd}
	We call $\mD^1$ the local system obtained as image of the morphism $f_*\Omega_{X,d}^{1}\to f_*\Omega^1_{X/B}(\log)$.
\end{defn} 
Equivalently, $\mD^1$ fits into the following short exact sequence 
\begin{equation}
	\label{dx}
	0\to \omega_B\to f_*\Omega^1_{X,d}\to\mD^1\to 0.
\end{equation}

Similarly, we denote by $\mD^{n}$ the local system contained in $f_*\Omega^{n}_{X/B}(\log)=f_*\omega_{X/B}$ and defined by the de Rham closed $n$-forms. As shown in \cite[Theorem 3.7]{RZ4}, $\mD^{n}$ is actually the local system of the so called second Fujita decomposition of $f_*\omega_{X/B}$. In fact, as recalled in the Introduction, there is a decomposition 
\begin{equation}
\label{fujita}
f_*\omega_{X/B}=\sU\oplus\sA
\end{equation} where $\sA$ is an ample vector bundle (or $\sA=0$) and $\sU$ is a unitary 
flat vector bundle; $\mD^{n}$ is such that $\sU=\mD^{n}\otimes_\mC \sO_B$. 

Finally, we point out that we have the inclusions $\mD^n\subset \sU\subseteq K^0\subseteq f_*\omega_{X/B}$.
The difference between the rank of $\sU$ and the rank of $K^0$ is  of interest, see for example \cite{GT}.

\subsection{Hermitian metrics on direct images} 
Consider  the open subset  $B\setminus E$ corresponding to the smooth fibers. The vector bundle   $f_*\omega_{X/B}$ and, more in general, the bundles of primitive cohomology classes  $\sP^{n-i,i}\subset R^if_*\Omega^{n-i}_{X/B}(\log)$ are endowed with a natural smooth Hermitian metric coming from the Hodge metric on the fibers, cf. \cite{G2,gt}. Actually, one can also consider the twisted cases $f_*(\omega_{X/B}\otimes L)$ and $R^if_*(\Omega^{n-i}_{X/B}(\log)\otimes L)$ where $L$ is a Hermitian line bundle on $X$ whose curvature satisfies suitable properties. See \cite{bo,bo2} for the case where $L$ is semi-positive and \cite{BPW} for the case where $L$ is semi-negative.

From now on we fix a K\"ahler form $\Omega$ on $X$. 
We denote by $\omega_t$ the restriction of $\Omega$ to $X_t$.

Following \cite{bo,bo2}, we first briefly recall the definition of the metric on $f_*\omega_{X/B}$ and some of its key properties. 
Consider $u$ a local section of $f_*\omega_{X/B}$ over a disk $\Delta\subset B\setminus E$. It can be seen as a function that maps a general point $t\in \Delta$ to a global holomorphic $(n,0)$-form $u_t$ on the corresponding fiber $X_t$. 
The Hermitian norm is then defined as
\begin{equation}
	\label{norma1}
	\lVert u_t\lVert^2_t=c_n\int_{X_t}  u_t\wedge\bar{u}_t
\end{equation} where $c_n=i^{{n}^2}$ is a unimodular constant.

Key computations on this metric strongly rely on the notion of a good {representative} of a section. 
A {representative} of $u$, denoted by $\bu$, is a smooth $(n,0)$-form on $f^{-1}(\Delta)\subset X$ which restricts to $u_t$ on the fibers.

The key point is that we have an explicit expression for the Chern connection $D = D'+ D''$ and  curvature $\Theta$ in terms of the representative $\bu$.
Indeed, we can write 
$$
\partial\bu=\mu\wedge dt,
$$
and, since $\bar{\partial}\bu$ is trivial along the fibers, we can write 
$$
\bar{\partial}\bu=\nu\wedge d\bar{t}+\eta\wedge dt,
$$ 
where $\nu$ and $\mu$ are relative forms of bidegree $(n,0)$ and $\eta$ is of bidegree $(n-1,1)$.
 We have that 
\begin{equation}\label{connanti}
(D''u)_t=\nu_t d\bar{t}
\end{equation} and 
\begin{equation}\label{connolo}
(D'u)_t=P(\mu_t)dt
\end{equation} where $P$ is the orthogonal projection of $(n, 0)$-forms on the space of holomorphic $(n, 0)$-forms.
The $(n-1,1)$-form $\eta_t$ has a neat interpretation in terms of the cup product with the Kodaira-Spencer class $\xi_t$: it is a representative of the cohomology class of the cup product $\xi_t\cup u_t\in H^{n-1,1}(X_t)$. See \cite{bo} for all the details on these formulas.

Of course the choice of the representative $\bu$ is not unique; two such choices differ by a term of the form $v\wedge dt$, but everything mentioned so far is independent from this choice.

Nevertheless if $u$ is a holomorphic section of $f_*\omega_{X/B}$ and $t$ is a point in $\Delta$, by \cite{bo,bo2}, there is a preferred choice of $\bu$ such that $\mu_t$ is holomorphic (i.e. $P(\mu_t)=\mu_t$) and $\eta_t$ is primitive. The key advantage of  this choice is that we get $(D'u)_t=\mu_t dt$  and  $-c_n\int_{X_t}\eta_t\wedge\bar{\eta}_t=\lVert\eta_t\lVert^2$. This choice of representative allows to compute the Chern curvature formula for the Hermitian metric as follows:
\begin{equation}
\label{curvatura1}
\langle\Theta u_t,u_t \rangle_t=\lVert\eta_t\lVert^2.
\end{equation}  See \cite{gt} for the original proof.  Berndtsson in \cite[Section 3]{bo2} proves that $\eta_t$ is the unique harmonic representative of the cohomology class $\xi_t\cup u_t\in H^{n-1,1}(X_t)$, hence we can also write this formula as
\begin{equation}
\label{curvatura2}
\langle\Theta u_t,u_t \rangle_t=\lVert\xi_t\cup u_t\lVert^2.
\end{equation} 

In the case of $\sP^{n-i,i}$, the situation is similar, but we have to be a little more careful since a section $[u]$ is a function that maps $t$ to a primitive cohomology class of $(n-i,i)$-forms on the corresponding fiber $X_t$. For this part we follow \cite{BPW}.

We denote by $u_{t,h}$ the harmonic representative of this class. By a fundamental result of Kodaira-Spencer, the variation of $u_{t,h}$ with respect to $t$ is
smooth.
The norm is
\begin{equation}
\lVert[u_t]\lVert^2_t=(-1)^ic_n\int_{X_t} u_{t,h}\wedge \bar{u}_{t,h}.
\end{equation}
As in the $(n,0)$ case, we consider a representative $\bu$ of $[u]$, that is a
smooth $(n-i,i)$-form on $f^{-1}(\Delta)$ which is $\bar{\partial}$-closed on the fibers of $f$
and whose restriction to each $X_t$ belongs to the cohomology class $[u_t]$. In this case the preferred choice of such a representative is given by the so called {vertical representative}.
\begin{defn}\label{vert}
We say that a smooth vector field $V$ on $X$ is {horizontal} if $df(V)$ is a non-zero $(1,0)$ vector field and $\Omega(V,\bar{U})=0$ for every other vector field $U$ with $df(U)=0$. 
We say that a form $\bu$ is {vertical} if the contraction $V|\bu$ is equal to 0 for any horizontal vector field $V$. 
\end{defn}
The vertical representative is unique, see \cite[Proposition 5.1]{BPW}; we consider $\bu$ the vertical representative of $[u]$.
 Similarly as before, we write
$$
\bar{\partial}\bu=\nu\wedge d\bar{t}+\eta\wedge dt
$$ and
$$
\partial\bu=\zeta \wedge d\bar{t}+\mu\wedge dt.
$$ 
Then $\nu$ and $\mu$ are relative forms of bidegree $(n-i,i)$, $\eta$ is of bidegree $(n-i-1,i+1)$ and $\zeta$ is of bidegree $(n-i+1,i-1)$; they are primitive on the fibers, see \cite[Section 7]{BPW}.
As before we can explicit compute the Chern connection as
\begin{equation}
(D''[u])_t=[\nu_t] d\bar{t}
\end{equation} and
\begin{equation}
(D'[u])_t=[\mu_t] dt.
\end{equation}

As a representative of the Kodaira-Spencer class $\xi_t$ we choose the (0,1)-form with coefficients in the tangent space $T_X$ given by $\bar{\partial}V$ where $V$ is the horizontal lift of the $(1,0)$ vector $\partial/\partial t$ on $\Delta$. That is, on $X_t$  we denote $k_t:=\bar{\partial}V_{|X_t}$ and $\xi_t=[k_t]\in H^1(X_t,T_{X_t})$.
It turns out that with this choice 
\begin{equation}
\eta_t=k_t\cup u_t
\end{equation}
and
\begin{equation}\label{zeta}
\zeta_t=\bar{k}_t\cup u_t
\end{equation}
if $u_t$ is harmonic.
\begin{rmk}
	In terms of cohomology classes, this means that
	$$
	[\eta_t]=\xi_t\cup [u_t]
	$$  and 
	$$
	[\zeta_t]=\overline{(\xi_t\cup\overline{[u_t]})}
	$$ since the contraction with $\bar{k}_t$ corresponds to the composition
	$$
	H^{p,q}\cong H^{q,p}\xrightarrow{\xi_t\cup\  } H^{q-1,p+1}\cong H^{p+1,q-1}
	$$ where the isomorphisms are given by complex conjugation.
\end{rmk}

The curvature formula generalising (\ref{curvatura1}) is
\begin{equation}
\label{curvatura3}
\langle\Theta [u_t],[u_t] \rangle_t=\lVert\eta_{t,h}\lVert^2-\lVert\zeta_{t,h}\lVert^2
\end{equation} where by $\eta_{t,h}$ and $\zeta_{t,h}$ we mean the harmonic part of $\eta_t$ and $\zeta_t$ respectively, see \cite{BPW}.

One of the immediate consequences of the  curvature formulas (\ref{curvatura1}) and (\ref{curvatura3}) is that the bundles $K^i_{\text{prim}}:=K^i\cap \sP^{n-i,i}$ are semi-negatively curved since by definition they are the primitive part of the kernel of the cup product with the Kodaira-Spencer class. For example if $u$ is a section of $K^0_{\text{prim}}=K^0$, then by (\ref{curvatura1}) we have that $\langle\Theta u_t,u_t \rangle_t=0$ and for $[u]$ a section of $K^i_{\text{prim}}$, $i\geq 1$, by (\ref{curvatura3}) we have that $\langle\Theta [u_t],[u_t] \rangle_t=-\lVert\zeta_{t,h}\lVert^2\leq0$. In both cases, since curvature decreases in subbundles, we have that $K^i_{\text{prim}}$ is semi-negatively curved on $B\setminus E$ for $i\geq0$. 

{Actually there is an extension across the singularities. Indeed $\sP^{n-i,i}$ and $K^i_{\text{prim}}$ are firstly defined on $B\setminus E$, but since they are subsheaves of $R^if_*\Omega^{n-i}_{X/B}(\log)$, which is defined over the whole base $B$, they have a natural extension across $E$. Furthermore by \cite[Section 9]{BPW} the chosen smooth Hermitian metric on $B\setminus E$ extends to a {singular Hermitian metric}. This singular metric is semi-negatively curved on $K^i_{\text{prim}}$ in the sense of singular metrics.}

\subsection{Metric interpretation}
The Chern connection $D$ on the vector bundle $f_*\omega_{X/B}$ is compatible with the Gauss-Manin connection. This is shown for example in \cite[Section 2.3]{bo2}. Here we rephrase it using the objects introduced in the beginning of this section. Using the same notation as above, we denote by $\Theta$ the Chern curvature of $f_*\omega_{X/B}$.
\begin{prop}
\label{caratt}
Consider the vector bundle $f_*\omega_{X/B}$ with the Hermitian structure described above. The subsheaves $\mD^n\subset \sU\subseteq  K^0$ of $f_*\omega_{X/B}$ are characterised as follows.
\begin{enumerate}[label=(\roman*)]
\item $\mD^n$ is the kernel of the Chern connection $D$
\item $\sU$ is the largest flat subbundle of $f_*\omega_{X/B}$
\item $K^0$ is the kernel of the curvature  $\Theta$.
\end{enumerate}
In particular $K^0$ is a (holomorphic) direct summand of $f_*\omega_{X/B}$ if and only if $K^0=\sU$.
\end{prop}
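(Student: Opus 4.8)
The plan is to read all three characterizations off Berndtsson's curvature formula \eqref{curvatura2} together with the compatibility of the Chern connection $D$ with the Gauss--Manin connection $\nabla$ recalled in \cite[Section 2.3]{bo2}, working throughout over $B\setminus E$ (where the metric is smooth and the sheaves $\mathbb{D}^n\subseteq\sU\subseteq K^0$ are determined by restriction). Two inputs will be used repeatedly. First, $\nabla u=Du+\xi_t\cup u_t$, the two terms being the components of $\nabla u\in F^{n-1}\otimes\omega_B$ along $F^n\otimes\omega_B$ and along $(F^{n-1}/F^n)\otimes\omega_B$. Second, since $\dim B=1$ one may write $\Theta=\Theta_1\otimes i\,dt\wedge d\bar t$ with $\Theta_1(t)$ a Hermitian endomorphism of the fibre of $f_*\omega_{X/B}$, and then \eqref{curvatura2} reads $\langle\Theta_1(t)w,w\rangle=\lVert\xi_t\cup w_t\rVert^2$ for every $w$, so $\Theta_1(t)$ is automatically positive semidefinite (one does not even need the full Nakano positivity of \cite{bo}).

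For (iii): if $u$ is a local section of $K^0$ then its image under the connecting morphism of Definition \ref{defk} vanishes, so $\xi_t\cup u_t=0$ for general $t$ by Remark \ref{remarkks}, hence for all $t\in B\setminus E$ by continuity of harmonic representatives; thus $\langle\Theta_1(t)u_t,u_t\rangle=0$, and positive semidefiniteness of $\Theta_1(t)$ forces $\Theta_1(t)u_t=0$, i.e.\ $u\in\ker\Theta$. Conversely $\Theta u=0$ gives $\lVert\xi_t\cup u_t\rVert^2=\langle\Theta_1(t)u_t,u_t\rangle=0$ for all $t$, so the image of $u$ under the connecting morphism is a holomorphic section vanishing on a dense set, hence zero, i.e.\ $u$ is a section of $K^0$. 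For (i): a local section $u$ of $\mathbb{D}^n$ is $\nabla$-flat (a de Rham closed holomorphic $n$-form represents a flat class of $R^nf_*\mathbb{C}$), so the $F^n$-component of $\nabla u=0$ gives $Du=0$; conversely $Du=0$ gives $\Theta u=D^2u=0$, hence $u$ is a section of $K^0$ by (iii), hence $\xi_t\cup u_t\equiv0$, hence $\nabla u=Du+\xi_t\cup u_t=0$, so $u$ is a $\nabla$-flat section of $R^nf_*\mathbb{C}$ lying pointwise in $F^n=f_*\omega_{X/B}$, which by \cite[Theorem 3.7]{RZ4} means $u$ is a section of $\mathbb{D}^n$. (Alternatively one can check $\mathbb{D}^n\subseteq\ker D$ by hand using a closed holomorphic representative $\bu$ of $u$: then $\nu_t=0$ and $\mu_t$ is $\partial$-exact on $X_t$, hence orthogonal to the harmonic $(n,0)$-forms, so $P(\mu_t)=0$.)

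For (ii): $\sU$ is flat by the second Fujita decomposition \cite{Fu,CD1,CD2,RZ4}; and a flat subbundle $\sG\subseteq f_*\omega_{X/B}$ is locally generated by $\nabla$-flat sections (its flat structure being that induced by $\nabla$, cf.\ \cite[Theorem 3.7]{RZ4}), which by (i) lie in $\ker D=\mathbb{D}^n$, so $\sG\subseteq\mathbb{D}^n\otimes_{\mathbb C}\sO_B=\sU$. For the last assertion: if $K^0=\sU$ then $K^0$ is a holomorphic direct summand by \eqref{fujita}; conversely, if $f_*\omega_{X/B}=K^0\oplus\sK'$, then restricting the Fujita projection $f_*\omega_{X/B}\to\sU$ to $K^0$ (which contains $\sU$) splits off $\sU$, so $K^0=\sU\oplus(K^0\cap\sA)$; substituting into $f_*\omega_{X/B}=K^0\oplus\sK'$ and comparing with \eqref{fujita} shows that $K^0\cap\sA$ is a direct summand, hence a quotient bundle, of the ample bundle $\sA$, so $K^0\cap\sA$ is ample or zero; but $K^0\cap\sA\subseteq K^0=\ker\Theta$ by (iii), so with the induced metric its Chern curvature is semi-negative and $\deg(K^0\cap\sA)\le0$, and since an ample bundle on a curve has positive degree we conclude $K^0\cap\sA=0$, i.e.\ $K^0=\sU$.

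The step I expect to be most delicate is pinning down, and correctly attributing to \cite{bo2,RZ4}, the ingredient used in (i) and (ii): that a Gauss--Manin flat section of $R^nf_*\mathbb{C}$ which has pure Hodge type $(n,0)$ at every point of the disk is represented by a de Rham closed holomorphic $n$-form --- equivalently, that the maximal flat subbundle of $f_*\omega_{X/B}$ is generated by such sections and equals $\mathbb{D}^n\otimes_{\mathbb C}\sO_B$. Once this is granted, the rest is the curvature identity \eqref{curvatura2} and elementary manipulations with the positive semidefinite endomorphism $\Theta_1$ and with the splitting $K^0=\sU\oplus(K^0\cap\sA)$.
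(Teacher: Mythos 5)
Your proposal is correct in substance and reaches all the assertions, but it travels a genuinely different road from the paper's. For (i) and for $K^0\subseteq\ker\Theta$ the paper never invokes the Gauss--Manin connection explicitly: it works directly with a de Rham closed holomorphic representative $\bu$ of a section of $\mD^n$ (resp.\ a holomorphic representative coming from $f_*\Omega^n_X$ for a section of $K^0$) and reads $Du=0$ and $\Theta u=D''D'u=0$ off the explicit formulas (\ref{connanti})--(\ref{connolo}); your route instead passes through $\nabla u=Du+\xi_t\cup u_t$ and the identification of $\mD^n$ with the $\nabla$-flat sections of pure type $(n,0)$, which is exactly the ingredient you flag as delicate --- it is legitimate (it is the content of \cite[Lemma 2.6]{RZ4} and \cite[Theorem 3.7]{RZ4}), but the paper's representative-based argument avoids needing it. For the inclusion $K^0\subseteq\ker\Theta$ you deduce $\Theta_1(t)u_t=0$ from $\langle\Theta_1(t)u_t,u_t\rangle=0$ together with positive semidefiniteness of $\Theta_1(t)$; the paper again computes $\Theta u=D''D'u=0$ directly from a holomorphic representative; both work. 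Your treatment of the final claim ($K^0$ a holomorphic direct summand iff $K^0=\sU$) via the splitting $K^0=\sU\oplus(K^0\cap\sA)$ and the degree argument on the ample summand is a welcome expansion of a step the paper leaves entirely implicit, and it is correct. The one soft spot is in (ii): you assert that an \emph{arbitrary} flat subbundle $\sG$ has its flat structure induced by $\nabla$, i.e.\ that its $D_\sG$-parallel frames are $D$-parallel in the ambient bundle. That requires the second fundamental form $\beta$ of $\sG$ to vanish, which does not follow from $\Theta_\sG=0$ alone: the curvature identity only gives $\lVert\xi_t\cup v_t\rVert^2=\lVert\beta(v)_t\rVert^2$ for sections $v$ of $\sG$, and neither side is forced to vanish. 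The maximality of $\sU$ among flat subbundles is really part of the Fujita--Catanese--Dettweiler decomposition theorem and should simply be cited for that purpose; to be fair, the paper's own proof of (ii) is equally terse on maximality and only verifies that $\sU$ itself is flat (its second fundamental form vanishes because $\sU$ is generated by the $D$-flat sections produced in (i)).
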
 
\begin{proof}
\textit{(i) }First note that  $\mD^n=\Ima (f_*\Omega^n_{X,d}\to f_*\omega_{X/B})$. In particular if $u$ is a local section of $\mD^n$ it follows that, at least locally on the base $B$, we can choose the representative $\bu$ to be a holomorphic de Rham closed $n$-form. Hence $\partial \bu=\bar{\partial}\bu=0$ and we get that $\mD^n$ is  contained in the kernel of the connection $D$ by (\ref{connanti}) and (\ref{connolo}). Viceversa if $u$ is a holomorphic section in the kernel of $D$, by (\ref{curvatura1}) we have that $\eta_t=0$ for all $t$, so that $\xi_t\cup u_t=0$. This means that $u$ is in $K^0$ by Remark \ref{remarkks}, hence, since $K^0$ can also be seen as the image of $f_*\Omega^{n}_{X}\to f_* \omega_{X/B}$, we can choose $\bu$ to be a section of $f_*\Omega^n_{X}$, so that $\bar{\partial}\bu=0$.  Finally, since $D'u=0$ we also have that $\mu=0$, that is $\partial \bu=0$.
So we conclude that $\bu$ is de Rham closed and $u$ is a local section of $\mD^n$.

\textit{(ii) }By point \textit{(i)}, this follows easily since $\sU=\mD^n\otimes \sO_B$.
Indeed note that $\sU$ is contained both in the kernel of the curvature $\Theta$ and in the kernel of the second fundamental form of $\sU$ in $f_*\omega_{X/B}$.
%

\textit{(iii) } If $u$ is a holomorphic section of $K^0$, locally on $B$ the representative $\bu$ can be chosen as a holomorphic form, as in point \textit{(i)}. So $\bar{\partial}\bu=0$ and $\partial \bu=\mu\wedge dt$, where $\mu$ is a holomorphic section of $f_* \omega_{X/B}$. Hence $\Theta u=D''D'u=0$. Viceversa if $u$ is a holomorphic section of $f_*\omega_{X/B}$ with $\Theta u=0$,  by (\ref{curvatura2}) $u_t$ is in the kernel of the cup product with $\xi_t$ for all $t$, hence $u$ is a section of $K^0$, by Remark \ref{remarkks}.
 \end{proof}

 \begin{rmk}
We have already seen that $K^0$ is semi-negatively curved with the metric induced from $f_*\omega_{X/B}$. It seems actually interesting to understand when $K^0$ is {maximal} inside $f_*\omega_{X/B}$ with this property. In Section \ref{sezkeriterati} for example we study the kernels of the iterated cup product with the Kodaira-Spencer class.
\end{rmk}

 \section{Massey product and curvature formula}
\label{sez2}
Using curvature formulas we can give conditions on liftability of Massey products.
First we briefly recall this notion, which is one of the main constructions of \cite{PT,RZ4}. If a Massey product $\alpha_t$ is Massey trivial, see Definition \ref{mastrivpoint} below, then its cup product with the Kodaira-Spencer class $\xi_t$ is zero, that is $\alpha_t$ is liftable. The consequences of this are well studied in relation to Torelli-type problems.  
On the other hand  the case where $\alpha_t$ is not Massey trivial, but still liftable is not well understood, especially when the Massey product is a section of the ample part $\sA$ of the Fujita decomposition (\ref{fujita}). Hence below we give a  metric interpretation of this condition. To our best knowledge it is the first time that such an interpretation is studied.

 From the direct image of Sequence (\ref{seq}), we get a short exact sequence on $B$ defining the vector bundle $K_\partial$:
\begin{equation}
	\label{kdelta}
	0\to \omega_B\to f_*\Omega^1_X\to K_\partial\to 0.
\end{equation} 
Intuitively we can think of $K_\partial\subseteq f_*\Omega^1_{X/B}$ as the sheaf of holomorphic one forms on the fibers of $f$ which are liftable to a tubular neighbourhood in $X$ of such fibers. 
For a more in depth study of $K_\partial$ see \cite{PT,GST,GT} for the case $n=2$, \cite{RZ4} for the general case. 

Note that if in Sequence (\ref{kdelta}) we consider, instead of $f_*\Omega^1_X$, the sheaf $f_*\Omega^1_{X,d}$, we obtain the exact sequence (\ref{dx})
and recover the local system $\mD^1$ as introduced in Section \ref{sez1}. 

By \cite[Lemma 3.5]{PT} or \cite[Lemma 2.2]{RZ4}, Sequence (\ref{kdelta}) splits, and from now on we choose and fix one such splitting.
Now consider  $n+1$ linearly independent sections $\eta_1,\ldots,\eta_{n+1}\in \Gamma(\Delta,K_{\partial})$ on an open subset $\Delta\subset B$ of coordinate $t$ and denote by $s_1,\dots,s_{n+1}\in \Gamma(\Delta,f_*\Omega^1_{X})$ the liftings of $\eta_1,\dots,\eta_{n+1}$ according to the chosen splitting of Sequence (\ref{kdelta}).

\begin{defn}
	\label{omegai}
	We call $\alpha_i\in \Gamma(\Delta, f_*\omega_{X/B})$, $i=1,\ldots,{n+1}$, the relative forms defined by $\eta_1\wedge\dots\wedge\hat{\eta_i}\wedge\dots\wedge\eta_{n+1}$ and $\sW$ the (local) $\sO_B$-submodule of $f_*\omega_{X/B}$ generated by the $\alpha_i$.
\end{defn} Note that by $\hat{\eta_i}$ we mean that $\eta_i$ is excluded from the wedge product.
\begin{defn}
	\label{mtrivial}
	The {Massey product} of $\eta_1,\ldots,\eta_{n+1}$ is the section $\alpha\in \Gamma(\Delta, f_*\omega_{X/B})$ defined by $s_1\wedge...\wedge s_{n+1}=\alpha\wedge dt$. We say that the sections $\eta_1,\ldots,\eta_{n+1}$  are {Massey trivial} if $\alpha$ is  a section of the submodule $\sW$. 
\end{defn} 

\begin{rmk}
	\label{remlif}
	
	 The crucial result is that if $\eta_1,\ldots,\eta_{n+1}$  are Massey trivial, we can find 
	liftings $\tilde{s}_i$ such that 
	\begin{equation}\label{giusti}
		\tilde{s}_1\wedge\dots\wedge\tilde{s}_{n+1}=0;
	\end{equation} in particular we can take $\alpha$ to be zero. See for example \cite[Proposition 4.10]{RZ4}.
\end{rmk}

Since $\mD^1$ is a subsheaf of $K_{\partial}$, it makes sense to construct Massey products starting from sections of $\mD^1$, i.e.  $\eta_i\in \Gamma(\Delta,\mD^1)$. One of the key points in \cite{PT} and \cite{RZ4} is exactly to consider this setting.


The construction of Massey products can also be done pointwise, that is for a fixed regular value $t\in B$. This is the original approach of \cite{CP,PZ,RZ1}. 
In this case one considers  $\eta_{1,t},\ldots,\eta_{n+1,t}\in K_{\partial,t}\leq H^0(X_t,\Omega^1_{X_t})$ the restriction of the $\eta_i$ to the fiber $X_t$. Similarly $\sW_t=\langle\alpha_{1,t},\dots,\alpha_{n+1,t}\rangle\leq H^0(X_t,\omega_{X_t})$ and $\alpha_t\in H^0(X_t,\omega_{X_t})$ are restriction on $X_t$ of $\sW$ and of $\alpha$ respectively. We have a pointwise version of Definition \ref{mtrivial}
\begin{defn}\label{mastrivpoint}
	We say that $\eta_{1,t},\ldots,\eta_{n+1,t}$ are Massey trivial (at $t$) if $\alpha_t\in \sW_t$. 
\end{defn}

\subsection{A curvature formula on Massey products}
In this section we consider a disk $\Delta\subset B\setminus E$. Using the same notation as above, take sections  $\eta_1,\dots,\eta_{n+1}\in \Gamma(\Delta,K_{\partial})$ and $\alpha$ their Massey product. The notion of Massey triviality, see Definition \ref{mtrivial}, means that $\alpha$ is a section of the $\sO_B$-submodule $\sW=\langle\alpha_{1},\dots,\alpha_{n+1}\rangle\subseteq {f_*\omega_{X/B}}_{|\Delta}$, hence it is  natural to study the quotient $\sQ:={f_*\omega_{X/B}}_{|\Delta}/\sW$. Indeed  the class $[\alpha]$ does not depend on the choice of liftings $s_i$ of $\eta_i$, while the section $\alpha$ does. Following this idea, we assume that $\sW$ is a non-trivial subbundle of ${f_*\omega_{X/B}}_{|\Delta}$ and we compute the curvature of $\sQ$ on the section  $[\alpha]$
and  relate it to the cup product with the Kodaira-Spencer class and the pointwise liftability of $\alpha$.
The result is a version of Formula \ref{curvatura2}, the difference is that working on the quotient $\sQ$ makes the key role of the local system $\mD^1$ more clear.

In this Section, let $\sE:={f_*\omega_{X/B}}_{|\Delta}$  and, to avoid confusion, here we denote by $\langle,\rangle_\sE$ its metric introduced in Section \ref{sez1} and by $\langle,\rangle_\sQ$ the quotient metric on $\sQ$. 

{ By  the above discussion}, the case to be studied is when $\alpha_t$ is not contained in the vector space $\sW_t=\langle\alpha_{1,t},\dots,\alpha_{n+1,t}\rangle$.
Without loss of generality, we assume that, by a standard orthogonalisation process, $\alpha_{1,t},\dots,\alpha_{m,t},\alpha_t$, $m\leq n+1$, are a unitary basis of $\langle\sW_t,\alpha_t\rangle$ (note that  $\alpha_{1,t},\dots,\alpha_{n+1,t}$ are not necessarily linearly independent).
We can extend to a $\sC^\infty$ unitary frame of $\sE$, denoted by $\alpha'_1,\dots,\alpha'_{m},\alpha',\tau_1,\dots,\tau_k$, such that $\alpha'_1,\dots,\alpha'_{m}$ is a smooth frame for $\sW$, $\alpha',\tau_1,\dots,\tau_k$ is a smooth frame for $\sW^\perp\cong\sQ$ and $\alpha'_1,\dots,\alpha'_{m},\alpha'$ restrict to $\alpha_{1,t},\dots,\alpha_{m,t},\alpha_t$ on $X_t$.

We compute $\langle\Theta_\sQ [\alpha_t],[\alpha_t]\rangle_\sQ$ by means of the second fundamental form $\mathfrak{S}$ of $\sQ$ in $\sE$. Following the notation of \cite{GH}, the second fundamental form $\mathfrak{S}\colon \sA^0(\sQ)\to \sA^1(\sW)$ is defined as the composition of the connection $D$ on $\sE$ restricted to $\sQ\cong\sW^\perp$
$$
D_{|\sQ}\colon \sA^0(\sQ)\to \sA^1(\sE)
$$ and the projection to $\sW$
$$
\sA^1(\sE)	\to \sA^1(\sW).
$$

We have 
$$
\langle\Theta_\sQ [\alpha_t],[\alpha_t]\rangle_\sQ=\langle\Theta_\sE \alpha_t,\alpha_t\rangle_\sE-\langle \mathfrak{S} \alpha_t,\mathfrak{S}\alpha_t\rangle_\sE
$$
 The first of these summands is given by (\ref{curvatura2}), that is 
 $$
 \langle\Theta_\sE \alpha_t,\alpha_t\rangle_\sE=\lVert\xi_t\cup {\alpha}_t\lVert^2
 $$ hence we are left with the second summand.

In our chosen unitary frame we write $\mathfrak{S}\alpha_t=\lambda_1\alpha_{1,t}+\dots+\lambda_{m}\alpha_{m,t}$, for certain $(0,1)$-forms $\lambda_i$, \cite[Proposition 1.6.6]{Kob}. Explicitly, since $\sW$ is a holomorphic subbundle of $\sE$,
$$
\lambda_i=\langle(D\alpha')_t,\alpha_{i,t}\rangle_\sE=-\langle\alpha_t,(D\alpha'_{i})_t\rangle_\sE=-\left(\int_{X_b}c_n\alpha_t\wedge \overline{P(\mu'_i)}\right)d\bar t
$$ where we use the same notation as Section \ref{sez1}, Equation (\ref{connolo}).
\begin{prop}
\label{formula}
The following formula computes the curvature of $\sQ$ on the class $[\alpha_t]$
\begin{equation}
\langle\Theta_\sQ [\alpha_t],[\alpha_t]\rangle_\sQ=\lVert\xi_t\cup {\alpha}_t\lVert^2+\sum_i\Big|\int_{X_b}c_n\alpha_t\wedge \overline{P(\mu'_i)}\Big|^2.
\end{equation}
\end{prop}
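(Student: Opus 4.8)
The strategy is to assemble the formula from the two ingredients already laid out in the run-up to the statement: the decomposition of the quotient curvature via the second fundamental form, and Berndtsson's curvature formula (\ref{curvatura2}) for $\sE = {f_*\omega_{X/B}}_{|\Delta}$. The key identity is the standard one for a holomorphic subbundle $\sW \subseteq \sE$ with quotient $\sQ \cong \sW^\perp$, namely
\[
\langle\Theta_\sQ [\alpha_t],[\alpha_t]\rangle_\sQ = \langle\Theta_\sE \alpha_t,\alpha_t\rangle_\sE - \langle \mathfrak{S}\alpha_t,\mathfrak{S}\alpha_t\rangle_\sE,
\]
which follows from the Gauss--Codazzi relation for the orthogonal splitting $\sE = \sW \oplus \sW^\perp$ as in \cite{GH} or \cite[Chapter I]{Kob}. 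So the only real work is to evaluate $\langle \mathfrak{S}\alpha_t,\mathfrak{S}\alpha_t\rangle_\sE$ in our chosen unitary frame.

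First I would write $\mathfrak{S}\alpha_t = \sum_i \lambda_i \alpha_{i,t}$ in the smooth unitary frame $\alpha'_1,\dots,\alpha'_m$ of $\sW$, which is legitimate precisely because $\mathfrak{S}$ lands in $\sA^1(\sW)$ and the frame restricts to the orthonormal basis $\alpha_{1,t},\dots,\alpha_{m,t}$ on $X_t$. Then, since the $\lambda_i$ are $(0,1)$-forms, pointwise at $t$ we get $\langle \mathfrak{S}\alpha_t,\mathfrak{S}\alpha_t\rangle_\sE = \sum_i |\lambda_i|^2$ (the cross terms vanish by orthonormality of the $\alpha_{i,t}$, and $|\lambda_i|^2$ means the squared modulus of the coefficient of $d\bar t$). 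Next I would compute $\lambda_i$ itself: by definition $\lambda_i = \langle (D\alpha')_t, \alpha_{i,t}\rangle_\sE$, and since $\alpha'$ restricts to a holomorphic section direction while $\alpha'_i$ generates a holomorphic subbundle, one uses compatibility of the Chern connection with the Hermitian metric together with $\langle \alpha', \alpha'_i\rangle_\sE \equiv \delta$-type constancy to flip this to $-\langle \alpha_t, (D\alpha'_i)_t\rangle_\sE = -\langle \alpha_t, (D'\alpha'_i)_t\rangle_\sE$, the last equality because $D''\alpha'_i$ has no $d\bar t$ contribution relevant here (or rather: the metric pairing kills the $(0,1)$-part against the holomorphic $\alpha_t$). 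Now invoke (\ref{connolo}): $(D'\alpha'_i)_t = P(\mu'_i)\,dt$, where $\mu'_i$ comes from $\partial\mathbf{u}'_i = \mu'_i \wedge dt$ for a representative of the section underlying $\alpha'_i$. Substituting and using the norm (\ref{norma1}) gives
\[
\lambda_i = -\left(\int_{X_b} c_n\, \alpha_t \wedge \overline{P(\mu'_i)}\right) d\bar t,
\]
exactly as recorded just before the proposition.

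Finally I would plug everything in: $\langle\Theta_\sE\alpha_t,\alpha_t\rangle_\sE = \lVert\xi_t\cup\alpha_t\rVert^2$ by (\ref{curvatura2}), and $\langle\mathfrak{S}\alpha_t,\mathfrak{S}\alpha_t\rangle_\sE = \sum_i |\lambda_i|^2 = \sum_i \big|\int_{X_b} c_n\,\alpha_t\wedge\overline{P(\mu'_i)}\big|^2$, so that
\[
\langle\Theta_\sQ[\alpha_t],[\alpha_t]\rangle_\sQ = \lVert\xi_t\cup\alpha_t\rVert^2 + \sum_i\Big|\int_{X_b}c_n\,\alpha_t\wedge\overline{P(\mu'_i)}\Big|^2,
\]
noting the crucial sign flip: the second fundamental form enters the Gauss equation for the \emph{quotient} with a $+$ (curvature increases in quotients), which is what converts the $-\langle\mathfrak{S},\mathfrak{S}\rangle$ into the $+\sum|\cdot|^2$ above. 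The main obstacle, and the step deserving the most care, is the bookkeeping in the computation of $\lambda_i$: one must be consistent about which frame is holomorphic, track the $dt$ versus $d\bar t$ types correctly through the Chern connection, and make sure the orthogonalisation $\alpha_{1,t},\dots,\alpha_{m,t},\alpha_t$ is genuinely extendable to a smooth unitary frame with $\sW$ a subbundle (this is where the hypothesis that $\sW$ is a non-trivial subbundle of $\sE$ on $\Delta$ is used). Once that frame is fixed, the rest is a direct substitution into (\ref{curvatura2}) and (\ref{connolo}).
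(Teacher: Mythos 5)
Your proposal is correct and takes essentially the same route as the paper, whose proof of Proposition \ref{formula} is exactly the discussion preceding it: the Gauss-type identity $\langle\Theta_\sQ [\alpha_t],[\alpha_t]\rangle_\sQ=\langle\Theta_\sE \alpha_t,\alpha_t\rangle_\sE-\langle \mathfrak{S} \alpha_t,\mathfrak{S}\alpha_t\rangle_\sE$, the evaluation $\langle\Theta_\sE \alpha_t,\alpha_t\rangle_\sE=\lVert\xi_t\cup\alpha_t\lVert^2$ via (\ref{curvatura2}), and the computation of the coefficients $\lambda_i$ of $\mathfrak{S}\alpha_t$ in the chosen unitary frame. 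The only wobbly point is your sign discussion: writing $\lambda_i=c_i\,d\bar t$, the term $\langle\mathfrak{S}\alpha_t,\mathfrak{S}\alpha_t\rangle_\sE$ is the $(1,1)$-form $\sum_i\lambda_i\wedge\bar\lambda_i=-\sum_i|c_i|^2\,dt\wedge d\bar t$, so the minus sign in the Gauss identity \emph{already} produces the positive contribution $+\sum_i|c_i|^2$ in the statement --- you cannot consistently assert both that $\langle\mathfrak{S}\alpha_t,\mathfrak{S}\alpha_t\rangle_\sE=+\sum_i|\lambda_i|^2$ and that the Gauss equation carries a $+$; one of those two claims must absorb the orientation reversal $d\bar t\wedge dt=-dt\wedge d\bar t$, and it is the former.
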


As pointed out above, the case where the sections $\eta_{i}$ are in the local system $\mD^1$ is of particular interest.
The following corollary indeed specifies what happens in this case.
\begin{cor}\label{curvagg}
Let $\eta_{i}\in \Gamma(\Delta,\mD^1)$, then, for every $t\in \Delta$, $\alpha_t$ is liftable iff $\langle\Theta_\sQ [\alpha_t],[\alpha_t]\rangle_\sQ=0$. 
\end{cor}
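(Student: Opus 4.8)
The plan is to deduce the statement directly from the curvature formula of Proposition \ref{formula}, once the hypothesis $\eta_i\in\Gamma(\Delta,\mD^1)$ is exploited to pin down the submodule $\sW$. Both summands on the right-hand side of that formula are manifestly non-negative, so $\langle\Theta_\sQ[\alpha_t],[\alpha_t]\rangle_\sQ=0$ is equivalent to the simultaneous vanishing of $\lVert\xi_t\cup\alpha_t\rVert^2$ and of $\sum_i\bigl|\int_{X_t}c_n\alpha_t\wedge\overline{P(\mu'_i)}\bigr|^2$, i.e. of $\langle\mathfrak S\alpha_t,\mathfrak S\alpha_t\rangle_\sE$. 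The vanishing of the first term is by definition the (infinitesimal) liftability $\xi_t\cup\alpha_t=0$, and this is a condition on the class $[\alpha_t]\in\sQ_t$ alone, since each $\alpha_{i,t}$ lies in $K^0_t=\Ker(\xi_t\cup(-))$ (the generators $\alpha_i$ of $\sW$ being images of sections of $f_*\Omega^n_X$, cf. the proof of Proposition \ref{caratt}). So the whole content of the corollary reduces to showing that, when $\eta_i\in\mD^1$, the second fundamental form $\mathfrak S$ of $\sQ$ in $\sE=f_*\omega_{X/B}|_\Delta$ vanishes identically.

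To prove this I would show that $\sW$ is a $D$-parallel subbundle of $\sE$. The key point is that when $\eta_i\in\mD^1$ one may choose the liftings $s_i$ to be de Rham closed holomorphic $1$-forms on $f^{-1}(\Delta)$ — this is precisely the meaning of Definition \ref{defd} together with the discussion following (\ref{kdelta}). Then each generator $\alpha_i$ of $\sW$ is the image in $f_*\omega_{X/B}$ of the de Rham closed holomorphic $n$-form $s_1\wedge\cdots\wedge\widehat{s_i}\wedge\cdots\wedge s_{n+1}$, hence a flat section of the local system $\mD^n$. By Proposition \ref{caratt}(i), $\mD^n$ is the kernel of the Chern connection, so $D\alpha_i=0$ for every $i$. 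Since $\sW$ is generated over $\sO_B$ by the $\alpha_i$, for any local section $\sigma=\sum g_i\alpha_i$ one gets $D\sigma=\sum(dg_i)\,\alpha_i\in\sA^1(\sW)$, so the second fundamental form of $\sW$ in $\sE$ vanishes; since $D$ is metric, the second fundamental form of the orthogonal complement $\sW^\perp\cong\sQ$ then vanishes as well, i.e. $\mathfrak S\equiv0$. Substituting into Proposition \ref{formula} yields $\langle\Theta_\sQ[\alpha_t],[\alpha_t]\rangle_\sQ=\lVert\xi_t\cup\alpha_t\rVert^2$, which gives the claimed equivalence.

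The step I expect to need the most care is the choice of the closed lifts $s_i$: the splitting of (\ref{kdelta}) fixed earlier need not take values in $f_*\Omega^1_{X,d}$, so one must reconcile this with the fact that $\sW$ and the class $[\alpha]$ do not depend on the splitting. Concretely, the ambiguity in each $s_i$ is a section of $f^*\omega_B$, hence a multiple of $f^*dt$, and such terms drop out of every wedge $s_1\wedge\cdots\wedge\widehat{s_i}\wedge\cdots\wedge s_{n+1}$ as well as of $s_1\wedge\cdots\wedge s_{n+1}$; thus the $\alpha_i$ and the class $[\alpha]$ may be computed using the closed lifts. Everything else is either already recorded in the excerpt — the Gauss–Codazzi decomposition is built into Proposition \ref{formula}, and the analogous vanishing of the second fundamental form of $\sU$ in $f_*\omega_{X/B}$ appears in Proposition \ref{caratt}(ii) — or is routine Hermitian linear algebra.
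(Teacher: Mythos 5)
Your proof is correct and follows essentially the same route as the paper: the hypothesis $\eta_i\in\Gamma(\Delta,\mD^1)$ forces the generators $\alpha_i$ of $\sW$ into the local system $\mD^n$, hence into the kernel of the Chern connection by Proposition \ref{caratt}(i), so the summation in Proposition \ref{formula} vanishes and only the liftability term $\lVert\xi_t\cup\alpha_t\rVert^2$ survives. Your extra care about choosing de Rham closed lifts $s_i$ and about the independence of the $\alpha_i$ from the splitting of (\ref{kdelta}) just fills in details the paper leaves implicit.
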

\begin{proof}
Note that if the $\eta_{i}$ are in the local system $\mD^1$, then the $\alpha_{i}$ are in $\mD^n$, hence we can assume that the $\alpha_i'$ are holomorphic and by Proposition \ref{caratt} the summation in the formula of Proposition \ref{formula} is zero.
\end{proof}
\begin{rmk}
	In particular $\alpha$ is a section of $K^0$ if and only if $\langle\Theta_\sQ [\alpha_t],[\alpha_t]\rangle_\sQ=0$ for every $t\in \Delta$. Of course by Formula \ref{curvatura2}, $\alpha$ is a section of $K^0$ if and only if $\langle\Theta_\sE \alpha_t,\alpha_t\rangle_\sE=0$ for every $t\in \Delta$. The condition on the curvature of $\sQ$ is in general stronger in the sense that it points to  directions where the curvature of the quotient bundle is degenerate.
%
%
\end{rmk}

Another interesting point not fully analysed in
the literature is when the Massey product is a section of $\sA$, the ample part of the Fujita decomposition. In this case we can relax the assumption on the $\eta_{i}$ and obtain the same conclusion.
\begin{cor}\label{curvagg2}
	Assume that $\eta_{i}\in \Gamma(\Delta,\mD^1\otimes \sO_B)$ and that their Massey product $\alpha$ is a holomorphic section of $\sA$, then then $\alpha_t$ is liftable iff $\langle\Theta_\sQ [\alpha_t],[\alpha_t]\rangle_\sQ=0$.
\end{cor}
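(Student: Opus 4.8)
The plan is to read the statement off the curvature formula of Proposition \ref{formula}, which presents $\langle\Theta_\sQ[\alpha_t],[\alpha_t]\rangle_\sQ$ as the sum of the two non-negative terms $\lVert\xi_t\cup\alpha_t\lVert^2$ and $\sum_i\Big|\int_{X_t}c_n\alpha_t\wedge\overline{P(\mu'_i)}\Big|^2$. Since $\alpha_t$ is liftable precisely when $\xi_t\cup\alpha_t=0$ (Remark \ref{remarkks} and the discussion opening Section \ref{sez2}), the implication ``$\langle\Theta_\sQ[\alpha_t],[\alpha_t]\rangle_\sQ=0\Rightarrow\alpha_t$ liftable'' is immediate, and the content is to prove, under the hypotheses $\eta_i\in\Gamma(\Delta,\mD^1\otimes\sO_B)$ and $\alpha\in\Gamma(\Delta,\sA)$, that the correction term $\sum_i\Big|\int_{X_t}c_n\alpha_t\wedge\overline{P(\mu'_i)}\Big|^2$ vanishes. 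This is the same mechanism as in Corollary \ref{curvagg}; the difference is that there the $\eta_i$ were de Rham closed, whereas here they are only holomorphic sections of the flat bundle $\mD^1\otimes\sO_B$, and this is precisely what forces the extra assumption on $\alpha$.

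First I would check that $\sW\subseteq\sU$. On the simply connected disk $\Delta$ fix a flat frame $\{\delta_j\}$ of $\mD^1$, each $\delta_j$ being the fibrewise restriction of a de Rham closed holomorphic form $s_j$ on $f^{-1}(\Delta)$, and write $\eta_l=\sum_j f_{lj}\delta_j$ with the $f_{lj}$ holomorphic. Expanding $\alpha_i=\eta_1\wedge\cdots\hat{\eta_i}\cdots\wedge\eta_{n+1}$ fibrewise, $\alpha_i$ becomes a holomorphic-in-$t$ combination of the wedges $\bigwedge_{l\ne i}\delta_{j_l}$, and each such wedge is the fibrewise restriction of the closed holomorphic form $\bigwedge_{l\ne i}s_{j_l}$ on $f^{-1}(\Delta)$; having a de Rham closed representative, it lies in $\mD^n$ by Proposition \ref{caratt}(i) (equivalently by (\ref{connanti}) and (\ref{connolo})). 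Hence $\alpha_i\in\Gamma(\Delta,\mD^n\otimes\sO_B)=\Gamma(\Delta,\sU)$ for every $i$, so $\sW\subseteq\sU$, and in particular the smooth unitary frame $\alpha'_1,\dots,\alpha'_m$ of $\sW$ used in the set-up of Proposition \ref{formula} consists of smooth sections of $\sU$.

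Next I would compute $P(\mu'_i)$ abstractly. Since $\sU=\mD^n\otimes\sO_B$ with $\mD^n=\ker D$, the Chern connection $D$ of $f_*\omega_{X/B}$ preserves $\sU$ and restricts to the flat connection there: writing $\alpha'_i=\sum_k g_{ik}\beta_k$ in a flat frame $\{\beta_k\}$ of $\sU_{|\Delta}$ with $g_{ik}$ smooth gives $D'\alpha'_i=\sum_k(\partial g_{ik}/\partial t)\,dt\otimes\beta_k$, so comparing with $(D'\alpha'_i)_t=P(\mu'_i)\,dt$ yields $P(\mu'_i)=\sum_k\frac{\partial g_{ik}}{\partial t}(t)\,\beta_{k,t}\in\mD^n_t\subseteq\sU_t$. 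Now I invoke that the Fujita decomposition (\ref{fujita}) is orthogonal for the Hodge metric on $B\setminus E$ --- equivalently, $\sU$ is $D$-parallel, hence $\sU^\perp$ is $D$-parallel, and by uniqueness of (\ref{fujita}) it equals $\sA$. As $\alpha\in\Gamma(\Delta,\sA)$ we get $\alpha_t\perp\sU_t$, whence $\int_{X_t}c_n\alpha_t\wedge\overline{P(\mu'_i)}=\langle\alpha_t,P(\mu'_i)\rangle_t=0$ for every $i$. Plugging back into Proposition \ref{formula} leaves $\langle\Theta_\sQ[\alpha_t],[\alpha_t]\rangle_\sQ=\lVert\xi_t\cup\alpha_t\lVert^2$, which is $0$ if and only if $\xi_t\cup\alpha_t=0$, i.e.\ if and only if $\alpha_t$ is liftable.

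The step I expect to require the most care is the first one: one has to make sure that wedges of the closed logarithmic $1$-forms $s_j$ genuinely represent sections of the local system $\mD^n$ --- keeping track of the logarithmic poles along $S$ and of the $(n,0)$ Hodge type so that the identification $\bigwedge_{l\ne i}\delta_{j_l}\in\mD^n$ is legitimate --- and, correspondingly, that the scalar coefficients produced by passing from $\mD^1$ to $\mD^1\otimes\sO_B$ are holomorphic in $t$, which is exactly where the hypothesis $\eta_i\in\Gamma(\Delta,\mD^1\otimes\sO_B)$ is used. The other two ingredients, $D$-parallelism of $\sU$ and orthogonality of the Fujita decomposition, are standard (see Proposition \ref{caratt} and \cite{CD1,CD2,RZ4}).
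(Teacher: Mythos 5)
Your proposal is correct and follows essentially the same route as the paper: the $\alpha_i$ lie in $\sU=\mD^n\otimes\sO_B$, so by the orthogonality of the Fujita decomposition the terms $\int_{X_t}c_n\alpha_t\wedge\overline{P(\mu'_i)}$ vanish for $\alpha\in\Gamma(\Delta,\sA)$, and Proposition \ref{formula} reduces to $\lVert\xi_t\cup\alpha_t\lVert^2$. You simply spell out in more detail the steps (flat frames, $D$-parallelism of $\sU$) that the paper's proof states in one line.
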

\begin{proof}
In this case the $\alpha_{i}$ are in $\mD^n\otimes \sO_B=\sU$ the unitary flat part of the Fujita decomposition. Since the decomposition $f_*\omega_{X/B}=\sU\oplus\sA$ is orthogonal, we get that $D\alpha'_i$ is not necessarily zero, but is orthogonal to $\alpha$. Hence once again the summation in the formula of Proposition \ref{formula} is zero.
\end{proof}

The above Corollaries give a metric interpretation of the liftability of Massey products. Based on our experience concerning the case of trivial Massey products, we expect these results to be useful in the future. 

\section{Isotriviality and strong non isotriviality}
\label{sezkeriterati}
In this section we use the metrics introduced in Section \ref{sez1} to study possible conditions for the triviality or, on the opposite side, strong non isotriviality of the fibration $f\colon X\to B$.

\subsection{Strongly non isotrivial fibrations}
We give the following definition
\begin{defn}\label{Ki}
	We call $\phi_i\colon f_*\omega_{X/B}\to  \omega_B(E)^{\otimes i}\otimes R^if_*\Omega^{n-i}_{X/B}(\log)$ the composition of the connecting morphisms obtained from the direct images of (\ref{seqlogp}):
	\begin{equation*} 
		\label{composizione}
	\phi_i\colon	f_*\omega_{X/B}\to  \omega_B(E)\otimes R^1f_*\Omega^{n-1}_{X/B}(\log)\to \dots\to \omega_B(E)^{\otimes i}\otimes R^if_*\Omega^{n-i}_{X/B}(\log).
	\end{equation*}
	We call $\varphi_i\colon f_*\omega_{X/B}\otimes T_B(-E)^{\otimes i}\to R^if_*\Omega^{n-i}_{X/B}(\log)$ the morphism obtained from $\phi_i$ by tensoring with $T_B(-E)^{\otimes i}$.
\end{defn} These morphisms are given on smooth fibers by iterated cup product with the Kodaira-Spencer class. 
From the Introduction, we recall that if $\phi_n$ is not zero (equivalently $\varphi_n$ is not zero) then $f$ is strongly non isotrivial.
By the Fujita decomposition (\ref{fujita}), $	f_*\omega_{X/B}=\sU\oplus\sA$ and, since $\sU$ is contained in $K^0=\ker \phi_1$, we look at the image of $\sA$. 

In particular note that if $\sA$ is not contained in $K^0$ (i.e. $\sA\neq0$), and also $\phi_i(\sA)$ is not contained in $K^i\otimes \omega_B(E)^{\otimes i}$, for $i=1,\dots, n-1$, then the image $\phi_n(\sA)$ is not zero and  $f$ is strongly non isotrivial. Actually, in our setting, we are dealing with primitive forms. In fact the holomorphic $(n,0)$-forms on the fibers are primitive and hence the images $\phi_i(\sA)$ are bundles of primitive classes, since cup product of a primitive form with the Kodaira-Spencer class is again primitive. 

\subsubsection{Curvature conditions for strong non isotriviality}
A possible condition that ensures that $\phi_i(\sA)$ is not contained in $K^i_{\text{prim}}\otimes \omega_B(E)^{\otimes i}$, $i=1,\dots, n-1$,
since $\sA$ is ample, is that $K^i_{\text{prim}}\otimes \omega_B(E)^{\otimes i}$ induces a metric with semi-negative curvature on $\phi_i(\sA)\cap (K^i_{\text{prim}}\otimes \omega_B(E)^{\otimes i})$.

We have seen in Section \ref{sez1} how to define a smooth metric on $\sP^{n-i,i}|_{B\setminus E}$. Given a metric on $\omega_{B}(E)$ of local weight $\psi$ and curvature denoted by $c(\psi)$, we consider the tensor metric on the bundle $\sP^{n-i,i}\otimes \omega_{B}(E)^{\otimes i}|_{B\setminus E}$.
The following result gives a curvature formula for this metric.
We write an element of $\sP^{n-i,i}\otimes \omega_{B}(E)^{\otimes i}$ at the point $t$ as $[u_t]\otimes l_t$ where $l_t$ is of norm 1 and $u_t$ is  harmonic on $X_t$. 
\begin{prop}
The curvature $\Theta_i$ of $\sP^{n-i,i}\otimes \omega_{B}(E)^{\otimes i}|_{B\setminus E}$ satisfies 
\begin{equation}\label{tensor}
\langle\Theta_i ([u_t]\otimes l_t),[u_t]\otimes l_t\rangle=\lVert\eta_{t,h}\lVert^2-\lVert\zeta_{t,h}\lVert^2+ic(\psi)\lVert [u_t]\lVert^2
\end{equation}
where $\eta_{t,h}$, $\zeta_{t,h}$ are defined as in (\ref{curvatura3}).
\end{prop}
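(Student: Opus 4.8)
The plan is to obtain \eqref{tensor} from the standard behaviour of Chern curvature under tensor products, using as input the curvature formula \eqref{curvatura3} already established for $\sP^{n-i,i}$. Recall that for two Hermitian holomorphic bundles $(E,h_E)$ and $(L,h_L)$ the Chern connection of $(E\otimes L,h_E\otimes h_L)$ is $D_{E\otimes L}=D_E\otimes\mathrm{id}_L+\mathrm{id}_E\otimes D_L$, so that, the mixed terms cancelling,
\[
\Theta_{E\otimes L}=\Theta_E\otimes\mathrm{id}_L+\mathrm{id}_E\otimes\Theta_L .
\]
I would apply this with $E=\sP^{n-i,i}$ carrying the Hodge metric of Section \ref{sez1}, which is smooth on $B\setminus E$, and $L=\omega_B(E)^{\otimes i}$ carrying the $i$-th power of the chosen metric of local weight $\psi$.

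Then I would pair this identity against the decomposable vector $[u_t]\otimes l_t$, using that the metric on $E\otimes L$ is the product metric and that $\lVert l_t\rVert=1$. This splits $\langle\Theta_i([u_t]\otimes l_t),[u_t]\otimes l_t\rangle$ as the sum of $\langle\Theta_{\sP^{n-i,i}}[u_t],[u_t]\rangle\,\lVert l_t\rVert^2$ and $\lVert[u_t]\rVert^2\,\langle\Theta_L l_t,l_t\rangle$. The first summand is precisely the left-hand side of \eqref{curvatura3} — here one uses the hypothesis that $u_t$ is the harmonic representative of $[u_t]$ — so it equals $\lVert\eta_{t,h}\rVert^2-\lVert\zeta_{t,h}\rVert^2$. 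For the second summand, $L$ is a line bundle, hence $\Theta_L$ acts on $l_t$ by multiplication by its curvature $(1,1)$-form; since $L$ is the $i$-th tensor power of $\omega_B(E)$, whose curvature in the weight-$\psi$ trivialisation is $c(\psi)$, evaluating on the unit section $l_t$ produces the term $ic(\psi)\lVert[u_t]\rVert^2$. Adding the two summands gives \eqref{tensor}.

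I do not expect a genuine obstacle here: the statement is a formal consequence of the tensor-product curvature identity together with \eqref{curvatura3}, which carries all the analytic content (harmonic projection, the role of $\eta_t,\zeta_t$ as representatives of the cup products with $\xi_t$, cf.\ \cite{BPW}). The only points needing attention are bookkeeping ones — keeping track of the unimodular constant $c_n$ and the sign $(-1)^i$ concealed in the fibrewise norm, and fixing the normalisation of $c(\psi)$ (including the factor carried by the $i$-th tensor power) so that the line-bundle contribution appears in the stated form $ic(\psi)\lVert[u_t]\rVert^2$. Both are consistent with the conventions of Section \ref{sez1} and of \cite{BPW}, so no further input is required.
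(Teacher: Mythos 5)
Your proposal is correct and matches the paper's proof essentially verbatim: both apply the tensor-product curvature identity $\Theta_{F\otimes L}=\Theta_F\otimes I_L+I_F\otimes\Theta_L$ with $F=\sP^{n-i,i}$ and $L=\omega_B(E)^{\otimes i}$, evaluate on the decomposable vector $[u_t]\otimes l_t$ using $\lVert l_t\rVert=1$, and substitute Formula (\ref{curvatura3}) for the Hodge-metric term and $ic(\psi)$ for the line-bundle term. No further comment is needed.
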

\begin{proof}
This is the formula for the curvature of a tensor product. Call for simplicity $F=\sP^{n-i,i}$ and $L= \omega_{B}(E)^{\otimes i}$. Then 
$\Theta_{F\otimes L}=\Theta_F\otimes I_L+I_F\otimes\Theta_L$ and 
$$
\langle\Theta_{F\otimes L} [u_t]\otimes l_t,[u_t]\otimes l_t\rangle=\langle\Theta_F [u_t],[u_t]\rangle\lVert l_t\lVert^2+\langle\Theta_L l_t,l_t\rangle\lVert [u_t]\lVert^2
$$ and we are done using that $\lVert l_t\lVert=1$ and Equation (\ref{curvatura3}). 
\end{proof}

We prove
\begin{prop}\label{negsing}
Assume that there exists a smooth metric on $\omega_B(E)$ of local weight $\psi$ such that on $B\setminus E$ we have $\lVert(\bar{k}_t\cup u_t)_h\lVert^2\geq ic(\psi)\lVert [u_t]\lVert^2$ for every section $[u]$ of $\varphi^i(\sA\otimes T_B(-E)^{\otimes i})\cap K^i_{\textnormal{prim}}$. Then $\phi_i(\sA)\cap (K^i_{\textnormal{prim}}\otimes \omega_B(E)^{\otimes i})$ admits a semi-negative singular Hermitian metric.
\end{prop}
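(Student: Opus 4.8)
The plan is to show that the obvious candidate metric — namely, the metric inherited by the subsheaf $\phi_i(\sA)\cap(K^i_{\textnormal{prim}}\otimes\omega_B(E)^{\otimes i})$ from the smooth tensor metric on $\sP^{n-i,i}\otimes\omega_B(E)^{\otimes i}|_{B\setminus E}$ — is semi-negative on $B\setminus E$ under the stated hypothesis, and then to invoke the extension across $E$ described at the end of Section \ref{sez1} to promote it to a singular Hermitian metric on all of $B$ with the same curvature property. First I would recall that curvature decreases in subbundles (the Griffiths inequality for the second fundamental form), so it suffices to check that $\Theta_i$ restricted to the relevant directions is $\leq 0$ on $B\setminus E$. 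Take a local section $[u]\otimes l$ of the subsheaf, normalised so that $l_t$ has norm $1$ and $u_t$ is harmonic; by the curvature formula \eqref{tensor} we have
$$
\langle\Theta_i([u_t]\otimes l_t),[u_t]\otimes l_t\rangle=\lVert\eta_{t,h}\lVert^2-\lVert\zeta_{t,h}\lVert^2+ic(\psi)\lVert[u_t]\lVert^2.
$$

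Next I would use that $[u]$ is a section of $K^i_{\textnormal{prim}}$: by the very definition of $K^i$ (Definition \ref{defk}, Remark \ref{remarkks}), the cup product $\xi_t\cup[u_t]$ vanishes, so the harmonic representative $\eta_{t,h}$ of $\xi_t\cup[u_t]$ is zero, i.e. $\lVert\eta_{t,h}\lVert^2=0$. Hence the curvature reduces to $-\lVert\zeta_{t,h}\lVert^2+ic(\psi)\lVert[u_t]\lVert^2$. Now by \eqref{zeta} and the fact that $u_t$ is harmonic, $\zeta_t=\bar k_t\cup u_t$, so $\zeta_{t,h}=(\bar k_t\cup u_t)_h$; the hypothesis $\lVert(\bar k_t\cup u_t)_h\lVert^2\geq ic(\psi)\lVert[u_t]\lVert^2$ is exactly what makes this expression $\leq 0$. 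Since $[u]$ ranges over sections of $\varphi^i(\sA\otimes T_B(-E)^{\otimes i})\cap K^i_{\textnormal{prim}}$, which is precisely the fibrewise description of $\phi_i(\sA)\cap(K^i_{\textnormal{prim}}\otimes\omega_B(E)^{\otimes i})$ after untwisting by $T_B(-E)^{\otimes i}$, this shows the induced metric is semi-negatively curved on $B\setminus E$.

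Finally I would address the extension across the singular divisor $E$. The sheaf $\phi_i(\sA)\cap(K^i_{\textnormal{prim}}\otimes\omega_B(E)^{\otimes i})$ is a subsheaf of $R^if_*\Omega^{n-i}_{X/B}(\log)\otimes\omega_B(E)^{\otimes i}$, which is a bona fide vector bundle on all of $B$; as recalled in Section \ref{sez1}, following \cite[Section 9]{BPW}, the smooth Hermitian metric on $\sP^{n-i,i}|_{B\setminus E}$ extends to a singular Hermitian metric on $\sP^{n-i,i}$ over $B$, semi-negatively curved on $K^i_{\textnormal{prim}}$ in the sense of singular metrics; tensoring with the smooth metric on $\omega_B(E)^{\otimes i}$ of weight $\psi$ preserves this, and restricting to a coherent subsheaf a singular Hermitian metric with semi-negative curvature again yields one with semi-negative curvature (again by monotonicity in subsheaves, valid in the singular setting). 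This produces the desired semi-negative singular Hermitian metric on $\phi_i(\sA)\cap(K^i_{\textnormal{prim}}\otimes\omega_B(E)^{\otimes i})$.

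The routine parts are the curvature bookkeeping of the previous two paragraphs; the only real subtlety — and the step I would be most careful about — is the extension across $E$: one must check that the subsheaf in question is saturated (or at least that the restriction of the singular metric is well defined and does not pick up extra positivity at the boundary), and that the monotonicity of curvature under passing to subsheaves is being applied in a regime where it is actually valid for singular Hermitian metrics, for which I would cite \cite{raufi,P} and the explicit boundary analysis of \cite[Section 9]{BPW}.
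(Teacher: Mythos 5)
Your proposal is correct and follows essentially the same route as the paper: apply the tensor curvature formula \eqref{tensor}, note that the $\eta_{t,h}$ term drops out for sections of $K^i_{\textnormal{prim}}$, identify $\zeta_{t,h}=(\bar k_t\cup u_t)_h$ via \eqref{zeta}, invoke the hypothesis together with curvature monotonicity in subbundles to get semi-negativity on $B\setminus E$, and then extend across $E$ using the singular-metric results of \cite{BPW}. Your extra caution about the behaviour of the restricted singular metric at the boundary is reasonable but not developed further in the paper, which simply cites \cite{BPW} for that step.
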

\begin{proof}
	
	As recalled in Section \ref{sez1}, by \cite{BPW}, $K^i_{\text{prim}}$ has  a semi-negatively curved singular Hermitian metric. We consider on $K^{i}_{\text{prim}}\otimes \omega_B(E)^{\otimes i}$ the singular metric given by the tensor product and on  $\phi_i(\sA)\cap (K^i_{\text{prim}}\otimes \omega_B(E)^{\otimes i})$ its restriction.
	
	On $B\setminus E$ these metrics are smooth and coincide 
	with the restriction of the metric on $\sP^{n-i,i}\otimes \omega_{B}(E)^{\otimes i}|_{B\setminus E}$ discussed above.

For the sections of $K^{i}_{\text{prim}}\otimes \omega_B(E)^{\otimes i}$ we have by (\ref{tensor}) and (\ref{zeta})
\begin{equation}\label{curvaturau}
	\langle\Theta_i ([u_t]\otimes l_t),[u_t]\otimes l_t\rangle=-\lVert\zeta_{t,h}\lVert^2+ic(\psi)\lVert [u_t]\lVert^2=-\lVert(\bar{k}_t\cup u_t)_h\lVert^2+ic(\psi)\lVert [u_t]\lVert^2.
\end{equation}

 By our hypothesis, the metric on $\phi_i(\sA)\cap (K^i_{\text{prim}}\otimes \omega_B(E)^{\otimes i})$ is then semi-negatively curved on $B\setminus E$, since curvature decreases in subbundles.
 
 By the results in \cite{BPW}, it is not difficult to see that it is semi-negative  everywhere in the sense of singular metrics.
\end{proof}

So, under the hypotheses of Proposition \ref{negsing}, the image $\phi_i(\sA)$ is not contained in $K^i_{\text{prim}}\otimes \omega_B(E)^{\otimes i}$, $i=1,\dots, n-1$,
since $\sA$ is ample. Hence we conclude:
\begin{thm}\label{ineqk}
Let $f\colon X\to B$ be a semistable fibration and assume that there exists a smooth metric on $\omega_B(E)$ of local weight $\psi$ such that on $B\setminus E$ we have $\lVert(\bar{k}_t\cup u_t)_h\lVert^2\geq ic(\psi)\lVert [u_t]\lVert^2$  for every section  $[u]$ of $\varphi^i(\sA\otimes T_B(-E)^{\otimes i})\cap K^i_{\textnormal{prim}}$, $i=1,\dots,n-1$. Then either $f_*\omega_{X/B}$ is unitary flat or the fibration is strongly non isotrivial.
\end{thm}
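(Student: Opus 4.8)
The plan is to prove the dichotomy directly: assuming $f_*\omega_{X/B}$ is \emph{not} unitary flat, I will show that the top iterated Kodaira--Spencer morphism $\phi_n$ is nonzero, which by the discussion following Definition~\ref{Ki} means exactly that $f$ is strongly non isotrivial. If $f_*\omega_{X/B}$ is not unitary flat, then in the Fujita decomposition \eqref{fujita} the bundle $\sA$ is nonzero, hence ample, and it suffices to produce a nonzero $\phi_n(\sA)$. I will obtain this by induction on $i$, proving for $i=0,1,\dots,n$ that $\phi_i(\sA)\neq 0$ and that, being a torsion-free (hence locally free, since $B$ is a smooth curve) quotient of $\sA$ via the surjection $\phi_i\colon\sA\twoheadrightarrow\phi_i(\sA)$, it is ample --- using the standard fact that a locally free quotient of an ample bundle is ample.

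The base step $i=0$ is vacuous ($\phi_0=\mathrm{id}$). For $i=1$: if $\phi_1(\sA)=0$ then $\sA\subseteq K^0=\ker\phi_1$; but by Section~\ref{sez1} the Hodge metric on $K^0$ has identically zero curvature and, by \cite{BPW}, extends across $E$ to a semi-negative singular Hermitian metric on the subsheaf $K^0\subseteq f_*\omega_{X/B}$, so its restriction would endow the ample bundle $\sA$ with a semi-negative singular metric, forcing $\deg\sA\leq 0$, a contradiction; hence $\phi_1(\sA)\neq 0$.

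For the inductive step, fix $1\leq i\leq n-1$ and suppose $\phi_i(\sA)$ is a nonzero ample subsheaf of $\omega_B(E)^{\otimes i}\otimes R^if_*\Omega^{n-i}_{X/B}(\log)$. Since holomorphic $(n,0)$-forms are primitive and cup product with the Kodaira--Spencer class preserves primitivity, $\phi_i(\sA)$ is in fact contained in $\sP^{n-i,i}\otimes\omega_B(E)^{\otimes i}$. Writing $\phi_{i+1}=\delta_i\circ\phi_i$ with $\delta_i$ the connecting morphism of Definition~\ref{defk} tensored by $\omega_B(E)^{\otimes i}$, the vanishing $\phi_{i+1}(\sA)=0$ would give $\phi_i(\sA)\subseteq\ker\delta_i\cap\bigl(\sP^{n-i,i}\otimes\omega_B(E)^{\otimes i}\bigr)=K^i_{\mathrm{prim}}\otimes\omega_B(E)^{\otimes i}$, that is $\phi_i(\sA)=\phi_i(\sA)\cap\bigl(K^i_{\mathrm{prim}}\otimes\omega_B(E)^{\otimes i}\bigr)$. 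But then Proposition~\ref{negsing} --- whose hypothesis is precisely the standing inequality $\lVert(\bar{k}_t\cup u_t)_h\rVert^2\geq ic(\psi)\lVert[u_t]\rVert^2$ --- endows $\phi_i(\sA)$ with a semi-negative singular Hermitian metric, so $\deg\phi_i(\sA)\leq 0$, contradicting ampleness. Hence $\phi_{i+1}(\sA)\neq 0$, and as a locally free quotient of $\sA$ it is ample, completing the induction. Reaching $i=n$ we get $\phi_n(\sA)\neq 0$, so $\phi_n\neq 0$ and $f$ is strongly non isotrivial.

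The routine ingredients --- that a locally free quotient of an ample bundle is ample, and that an ample bundle on a curve has positive degree --- cause no trouble. The step I expect to be the main obstacle is the implication ``admits a semi-negative singular Hermitian metric $\Rightarrow$ non-positive degree'' for the bundle \emph{over all of} $B$: this forces one to control the Hodge-theoretic singular metrics near the divisor $E$, which is exactly where the estimates of \cite{BPW} (together with the general singular-metric theory of \cite{raufi,P}) are needed, to guarantee that the determinant of the extended bundle carries a singular metric of non-positive curvature current and hence has degree $\leq 0$.
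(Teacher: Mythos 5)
Your proposal is correct and follows essentially the same route as the paper: the paper likewise reduces to showing, via Proposition~\ref{negsing} and the ampleness of $\sA$ (hence of its quotients $\phi_i(\sA)$), that $\phi_i(\sA)$ cannot be contained in $K^i_{\textnormal{prim}}\otimes\omega_B(E)^{\otimes i}$ for each $i$, so that $\phi_n(\sA)\neq 0$. Your write-up merely makes explicit, as an induction, the steps the paper compresses into the sentence preceding the theorem (local freeness of the images on a curve, ampleness of locally free quotients, and the degree argument against a semi-negative singular metric), all of which are standard and consistent with the paper's intended argument.
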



Note that a parallel approach is given by the study of the semi-positivity of $\sA\otimes T_B(-E)^{\otimes i}$ for $i=1,\dots,n-1$. Similarly as before, this condition shows that $\varphi_i (\sA\otimes T_B(-E)^{\otimes i})$ is not contained in   $K^i_{\text{prim}}$ for $i=1,\dots,n-1$ and hence $\varphi_n$ is not zero and $f$ is strongly non isotrivial (or $\sA$ is zero). We skip the computations since they are similar to the previous ones. We note that we only need the semi-positivity of $\sA\otimes T_B(-E)^{\otimes n-1}$ since it implies the semi-positivity of $\sA\otimes T_B(-E)^{\otimes i}$, $i=1,\dots,n-2$. This semi-positivity is given on $B\setminus E$ by the inequality $\lVert({k}_t\cup u_t)_h\lVert^2\geq (n-1)c(\psi)\lVert u_t\lVert^2$.
\begin{thm}\label{ineqa}
	Let $f\colon X\to B$ be a semistable fibration on a smooth projective curve $B$ and assume that there exists a smooth metric on $\omega_B(E)$ of local weight $\psi$ such that on $B\setminus E$ we have $\lVert({k}_t\cup u_t)_h\lVert^2\geq (n-1)c(\psi)\lVert u_t\lVert^2$  for all $u$ in $\sA$. Then either $f_*\omega_{X/B}$ is unitary flat or the fibration is strongly non isotrivial.
\end{thm}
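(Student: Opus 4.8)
The plan is to carry out the ``parallel approach'' sketched just before the statement: upgrade the hypothesis into a semi-positively curved singular Hermitian metric on $\sA\otimes T_B(-E)^{\otimes i}$ for $i=1,\dots,n-1$ and then run it through the chain of connecting morphisms $\phi_i$. If $\sA=0$ there is nothing to do, since then $f_*\omega_{X/B}=\sU$ is unitary flat; so assume $\sA\neq 0$. Since $\sU\subseteq K^0=\ker\phi_1$ we have $\phi_n|_\sU=0$, so it suffices to prove $\phi_n(\sA)\neq 0$, equivalently $\varphi_n\neq 0$, which by definition means $f$ is strongly non isotrivial. (The case where $\omega_B(E)$ is not nef, i.e.\ $T_B(-E)$ is ample, is easier and handled the same way with ``semi-positive'' upgraded to ``ample'', so from now on assume $\omega_B(E)$ nef.)

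First I would compute the Chern curvature of $\sA\otimes T_B(-E)^{\otimes(n-1)}$ on $B\setminus E$. As the Fujita decomposition (\ref{fujita}) is orthogonal for the Hodge metric, $\sA$ with the induced metric has curvature the restriction of $\Theta$, so by (\ref{curvatura2}), for a local holomorphic section $u$ of $\sA$ we get $\langle\Theta_\sA u_t,u_t\rangle_t=\lVert\xi_t\cup u_t\rVert^2=\lVert (k_t\cup u_t)_h\rVert^2$ (there is no $\zeta$-term for $(n,0)$-forms, and $k_t\cup u_t$ is already harmonic). Tensoring with $T_B(-E)^{\otimes(n-1)}$, the dual of $\omega_B(E)^{\otimes(n-1)}$, whose Chern curvature is $-(n-1)ic(\psi)$, the tensor-product curvature computation underlying (\ref{tensor}) gives, for $l_t$ a local frame of norm $1$,
\[
\langle\Theta_{\sA\otimes T_B(-E)^{\otimes(n-1)}}(u_t\otimes l_t),u_t\otimes l_t\rangle=\lVert (k_t\cup u_t)_h\rVert^2-(n-1)ic(\psi)\lVert u_t\rVert^2\geq 0
\]
by the hypothesis. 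So $\sA\otimes T_B(-E)^{\otimes(n-1)}$ is Griffiths semi-positive on $B\setminus E$. Exactly as in Proposition \ref{negsing}, the Hodge metric extends to a singular Hermitian metric over all of $B$, and by \cite{BPW} the induced metric on $\sA\otimes T_B(-E)^{\otimes(n-1)}$ is semi-positively curved in the sense of singular metrics; tensoring with the nef $\omega_B(E)^{\otimes(n-1-i)}$ shows $\sA\otimes T_B(-E)^{\otimes i}$ is semi-positively curved for every $i=1,\dots,n-1$.

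The core of the argument is then an induction showing $\varphi_i(\sA\otimes T_B(-E)^{\otimes i})\not\subseteq K^i_{\textnormal{prim}}$ for $i=0,1,\dots,n-1$ (reading $i=0$ as $\sA\not\subseteq K^0$). Granting this, untwisting the case $i=n-1$ says $\phi_{n-1}(\sA)$ is not killed by the last connecting morphism, hence $\phi_n(\sA)\neq 0$. For $i=0$: if $\sA\subseteq K^0$ then (\ref{curvatura2}) forces $\Theta_\sA\equiv 0$ on $B\setminus E$, and since $\sA$ is an orthogonal direct summand its second fundamental form in $f_*\omega_{X/B}$ also vanishes, so $\sA$ is a flat subbundle and $\sA\subseteq\sU$ by Proposition \ref{caratt}, contradicting $\sA\neq 0$. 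For $i\geq 1$: the inductive hypothesis, untwisted, gives $\phi_i(\sA)\neq 0$, so $\varphi_i(\sA\otimes T_B(-E)^{\otimes i})\neq 0$; if it were contained in $K^i_{\textnormal{prim}}$ it would be a nonzero torsion-free quotient of the semi-positively curved bundle $\sA\otimes T_B(-E)^{\otimes i}$ and simultaneously a subsheaf of the semi-negatively curved $K^i_{\textnormal{prim}}$ (cf.\ Proposition \ref{negsing} and \cite{BPW}). Since curvature decreases in subsheaves and increases in torsion-free quotients, its curvature must vanish, forcing all second fundamental forms involved to vanish as well, so it splits off as a parallel flat direct summand of $\sA\otimes T_B(-E)^{\otimes i}$ over $B\setminus E$; extending the bounded orthogonal projection across $E$ makes $\sA$ acquire a direct summand that is a twist of a flat bundle by $\omega_B(E)^{\otimes i}$, contradicting the ampleness of $\sA$.

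I expect this last contradiction to be the main obstacle, and it is precisely the step the paper defers with ``we skip the computations since they are similar to the previous ones''. Making it rigorous needs the singular Hermitian metric formalism of \cite{BPW,P,raufi} on two counts: to make ``curvature decreases in subsheaves, increases in torsion-free quotients'' meaningful across the singular locus $E$, and to convert ``flat determinant of the putative summand versus positivity of $\sA$'' into a genuine numerical contradiction. The cleanest instance is when $\omega_B(E)$ is numerically trivial, where $\sA\otimes T_B(-E)^{\otimes i}$ is again ample and a semi-negatively curved torsion-free quotient is impossible outright; the general nef case should reduce to this by a degree computation on determinants.
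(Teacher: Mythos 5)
Your overall strategy is the one the paper itself sketches (its ``proof'' of this theorem is essentially the two sentences preceding the statement, ending with ``we skip the computations since they are similar to the previous ones''), and the first part of your argument is sound and faithful to that sketch: the curvature computation giving semi-positivity of $\sA\otimes T_B(-E)^{\otimes (n-1)}$ on $B\setminus E$, the base case $\sA\not\subseteq K^0$ via Proposition \ref{caratt}, and the reduction to showing $\varphi_i(\sA\otimes T_B(-E)^{\otimes i})\not\subseteq K^i_{\textnormal{prim}}$ for $i=1,\dots,n-1$. You have also correctly diagnosed where the real content lies: a nonzero torsion-free quotient of a semi-positively curved bundle sitting inside the semi-negatively curved $K^i_{\textnormal{prim}}$ is only forced to be \emph{flat} of degree zero, not zero, so the flat summand must be excluded by some further argument.

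That further argument is where your proposal fails. You conclude that $\sA$ would acquire a direct summand isomorphic to $F\otimes\omega_B(E)^{\otimes i}$ with $F$ flat, ``contradicting the ampleness of $\sA$.'' There is no contradiction: on a curve a flat (hence semistable, degree-zero) bundle tensored with a positive power of an ample line bundle is again ample, so whenever $\deg\omega_B(E)>0$ (the typical case, e.g.\ $g(B)\geq 1$ with $E\neq\emptyset$) the summand $F\otimes\omega_B(E)^{\otimes i}$ is itself ample and perfectly compatible with $\sA$ being ample. Your closing remark that the nef case ``should reduce to this by a degree computation on determinants'' fails for the same reason: $\deg\bigl(F\otimes\omega_B(E)^{\otimes i}\bigr)=\mathrm{rk}(F)\cdot i\cdot\deg\omega_B(E)>0$ is exactly what ampleness predicts, so the determinant gives no clash. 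Note the asymmetry with Theorem \ref{ineqk}, which you are implicitly assuming transfers: there the ampleness of the source $\phi_i(\sA)$ (a quotient of $\sA$, hence of positive degree) collides numerically with the semi-negative singular metric on the target. In the ``parallel'' version the twist by $T_B(-E)^{\otimes i}$ has been moved onto the source, which destroys its ampleness and leaves only semi-positivity; semi-positive against semi-negative yields degree zero and no numerical contradiction. As written, your induction therefore does not close, and some genuinely new input (a strictness statement, or an argument that the flat piece would force part of $\sA$ into $K^0$ and hence into $\sU$) is needed and not supplied.
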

\begin{rmk}
In principle there is no correlation between the inequalities of Theorem \ref{ineqk} and \ref{ineqa}, except when $n=2$. In fact in this case note that the condition $\lVert({k}_t\cup u_t)_h\lVert^2\geq c(\psi)\lVert u_t\lVert^2$ of Theorem \ref{ineqa} together with the inequality $\lVert(\bar{k}_t\cup({k}_t\cup u_t)_h)_h\lVert^2\leq \lVert({k}_t\cup u_t)_h\lVert \lVert u_t\lVert$ coming from Cauchy formula together with the fact that $k_t$ and $\bar{k_t}$ are adjoint (see Proposition \ref{adjoint} below), give the condition $\lVert(\bar{k}_t\cup({k}_t\cup u_t)_h)_h\lVert^2\geq c(\psi)\lVert({k}_t\cup u_t)_h\lVert^2$. This is the condition required in Theorem \ref{ineqk} since ${k}_t\cup u_t$ gives, while $u_t$ varies in $\sA$, all the elements in $\varphi^i(\sA\otimes T_B(-E)^{\otimes i})\cap K^i_{\text{prim}}$.
\end{rmk}
As a final note, everything in this section can actually be generalised for fibrations over a higher dimensional base. In fact, even when $B$ is not a curve, by \cite{CK} the direct image $f_*\omega_{X/B}$ has a decomposition $f_*\omega_{X/B}=\sU\oplus\hat{\sA}$ where $\sU$ is unitary flat and  $\hat{\sA}$ is generically ample, that is $\hat{\sA}$ restricts to an ample vector bundle on the general complete intersection smooth curve in $B$.
Hence it is possible to consider the generalisation of the morphism $\phi_i$ and $\varphi_i$ as
$\phi_i\colon f_*\omega_{X/B}\to  \text{Sym}^i\Omega^1_B(\log E)\otimes R^if_*\Omega^{n-i}_{X/B}(\log)$ and $\varphi_i\colon f_*\omega_{X/B}\otimes   \text{Sym}^iT_B(-\log E)\to R^if_*\Omega^{n-i}_{X/B}(\log)$ and argue similarly to obtain similar conditions involving the curvature of the symmetric bundles.
\subsubsection{A case on surfaces}
Now, consider again the composition $\phi_n$
\begin{equation*}
\phi_n\colon f_*\omega_{X/B}\to  \omega_B(E)\otimes R^1f_*\Omega^{n-1}_{X/B}(\log)\to\dots\to \omega_B(E)^{\otimes n}\otimes R^nf_*\sO_{X}
\end{equation*} 
and restrict it to the general smooth fiber $X_t$ as
\begin{equation}
\phi_{n,t}\colon H^{n,0}(X_t)\to  T_{B,t}^\vee\otimes H^{n-1,1}(X_t)\to\dots\to (T_{B,t}^\vee)^{\otimes n}\otimes H^{0,n}(X_t)
\end{equation}
Since $\dim B=1$, choosing a local coordinate around $t$ we identify $T_{B,t}^\vee\cong \mC$ and, with a little abuse of notation, we denote by $\xi_t\cup$ every map
$$
\xi_t\cup \colon H^{p,q}(X_t)\to H^{p-1,q+1}(X_t)
$$ since they are all defined by the cup product with the Kodaira-Spencer class $\xi_t$.
\begin{prop}\label{adjoint}
Let $[u]\in H^{p,q}_{\textnormal{prim}}(X_t)$ and $[v]\in H^{p-1,q+1}_{\textnormal{prim}}(X_t)$, $p+q=n$.  Then 
$$
\langle\xi_t\cup [u],[v]\rangle=\langle [u],\overline{\xi_t\cup \overline{[v]}}\rangle
$$ 
\end{prop}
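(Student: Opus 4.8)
The statement concerns cohomology classes, so the plan is to reduce it to a pointwise algebraic identity for interior multiplication, together with Weil's formula for the Hodge star on primitive classes of the middle degree $p+q=n$.

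First I would fix on the fiber $X_t$ the harmonic representatives $u_h\in[u]$, $v_h\in[v]$ (so that $\overline{u_h},\overline{v_h}$ are harmonic representatives of $\overline{[u]},\overline{[v]}$, the Laplacian commuting with conjugation on the K\"ahler fiber), together with a Dolbeault representative $k_t$ of $\xi_t$, i.e.\ a $\bar\partial$-closed $T_{X_t}$-valued $(0,1)$-form. Since $u_h,\overline{v_h}$ are $\partial$- and $\bar\partial$-closed and $\bar\partial k_t=0$, the contractions $k_t\cup u_h$ (type $(p-1,q+1)$) and $k_t\cup\overline{v_h}$ (type $(q,p)$) are $\bar\partial$-closed and represent $\xi_t\cup[u]$ and $\xi_t\cup\overline{[v]}$. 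Weil's identity says that on a primitive $(a,b)$-form $\gamma$ with $a+b=n$ one has $\star\gamma=c_{a,b}\gamma$ for a constant $c_{a,b}=(-1)^{n(n+1)/2}i^{a-b}$ depending only on $n$ and the bidegree, with the crucial numerology $c_{a-1,b+1}=i^{-2}c_{a,b}=-c_{a,b}$. Writing the Hermitian pairing on harmonic forms as $\langle A,B\rangle_t=\int_{X_t}A\wedge\star\overline B$, and using Stokes to see that such integrals are unchanged when a harmonic representative is replaced by any closed representative of the same class (the difference is $\bar\partial$- or $\partial$-exact and pairs trivially with the closed form in the other slot), I obtain
\[
\langle\xi_t\cup[u],[v]\rangle=c_{q+1,p-1}\int_{X_t}(k_t\cup u_h)\wedge\overline{v_h},
\]
and, passing to the conjugate pairing $\langle[u],\overline{(\cdot)}\rangle=\overline{\langle\overline{(\cdot)},[u]\rangle}$ so that the Hodge star falls on the primitive form $\overline{u_h}$, then using $\overline A\wedge\overline B=\overline{A\wedge B}$ and that $k_t\cup\overline{v_h}$ represents $\xi_t\cup\overline{[v]}$,
\[
\langle[u],\overline{\xi_t\cup\overline{[v]}}\rangle=\overline{c_{q,p}}\int_{X_t}(k_t\cup\overline{v_h})\wedge u_h.
\]

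The heart of the proof is the pointwise identity $(k_t\cup u_h)\wedge\overline{v_h}=-\,u_h\wedge(k_t\cup\overline{v_h})$. Writing $k_t=\sum a^{i}_{\bar\jmath}\,d\bar z^{j}\otimes\partial_i$ in local holomorphic coordinates (with $\partial_i=\partial/\partial z^i$), I apply the anti-derivation rule for interior product to $u_h\wedge\overline{v_h}$, which is identically zero because its bidegree is $(p+q+1,p+q-1)=(n+1,n-1)$: for each $i$, $0=\iota_{\partial_i}(u_h\wedge\overline{v_h})=(\iota_{\partial_i}u_h)\wedge\overline{v_h}+(-1)^n u_h\wedge(\iota_{\partial_i}\overline{v_h})$. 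Multiplying by $a^{i}_{\bar\jmath}$, wedging with $d\bar z^{j}$, summing over $i,j$ and moving the one-form $d\bar z^{j}$ past the degree-$n$ form $u_h$ collapses all signs to a single $-1$.

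It then remains to combine the pieces: the Weil shift $c_{q+1,p-1}=-c_{q,p}$, the sign $-1$ from the pointwise identity, the reordering $(k_t\cup\overline{v_h})\wedge u_h=(-1)^n u_h\wedge(k_t\cup\overline{v_h})$, and the parity $\overline{c_{q,p}}=(-1)^{q-p}c_{q,p}=(-1)^{q+p}c_{q,p}=(-1)^n c_{q,p}$ (using $p+q=n$) all conspire so that both displayed pairings equal $c_{q,p}\int_{X_t}u_h\wedge(k_t\cup\overline{v_h})$, which is the claim. The only genuine obstacle is bookkeeping: keeping every Hodge-star, conjugation and reordering sign exact so that the cancellation is clean; the substantive inputs are merely Weil's identity — this is where primitivity of $[u]$ and $[v]$ is used — and the triviality of a wedge product of the wrong bidegree. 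A less transparent alternative would exploit the special symmetry of the harmonic Kodaira-Spencer form $k_t=\bar\partial V$ attached to the horizontal lift $V$, but since the argument above needs no distinguished representative I would not pursue it.
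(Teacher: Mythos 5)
Your proposal is correct and follows essentially the same route as the paper: reduce to harmonic representatives, use the primitive-form pairing (your Weil constants $c_{a,b}$ coincide with the paper's $(-1)^i c_n$ since $i^{n^2}=(-1)^{n(n+1)/2}i^{-n}$), and boil everything down to the pointwise identity $(k_t\cup u_h)\wedge\bar v_h=-\,u_h\wedge(k_t\cup\bar v_h)$. The only difference is that you actually prove that identity, via the anti-derivation rule applied to the identically vanishing $(n+1,n-1)$-form $u_h\wedge\bar v_h$, where the paper merely asserts it can be checked on decomposable forms -- a welcome precision, not a different argument.
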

\begin{proof}
We choose harmonic representatives $u_h$ and $v_h$ of the classes $[u]$ and $[v]$ respectively. Also we represent the class $\xi_t$ by its representative $k_t$ as above.

We have
$$
\langle\xi_t\cup [u],[v]\rangle=(-1)^{q+1}c_n\int_{X_t}(\xi_t\cup [u])_h\wedge\bar{v}_h=(-1)^{q+1}c_n\int_{X_t}(k_t\cup u_h)_h\wedge\bar{v}_h=(-1)^{q+1}c_n\int_{X_t}( k_t\cup u_h)\wedge\bar{v}_h
$$ and
$$
\langle [u],\overline{\xi_t\cup \overline{[v]}}\rangle=(-1)^{q}c_n\int_{X_t} u_h\wedge(
{\xi_t\cup \overline{[v]}})_h=(-1)^{q}c_n\int_{X_t} u_h\wedge({k_t\cup \bar{v}_h})_h=(-1)^{q}c_n\int_{X_t} u_h\wedge({k_t\cup \bar{v}_h})
$$
It is now easy to see, for example writing locally $u_h,v_h$ and $k_t$, that $(-1)^{q+1}c_n\int_{X_t}( k_t\cup u_h)\wedge\bar{v}_h=(-1)^{q}c_n\int_{X_t} u_h\wedge({k_t\cup \bar{v}_h})$. This can for example be verified on decomposable forms, since everything is linear. 
\end{proof}
\begin{rmk}
	From this formula we easily get the condition
 $K^q_{\textnormal{prim},t}\subset \overline{\xi_t(H^{q+1,p-1}_{\textnormal{prim}}(X_t))}^\perp$.
\end{rmk}
Denote by $\xi^q_t$ the composition of the cup product with $\xi_t$ taken $q$ times,
$$
\xi_t^q\colon H^{n,0}(X_t)\to H^{n-q,q}(X_t)
$$ 
Hence we have the following condition for strong non isotriviality
\begin{prop}\label{inters}
If $\xi_t^{n-1}(H^{n,0}(X_t))\cap \overline{\xi_t(H^{n,0}(X_t))}\neq \{0\}$ then the fibration is strongly non isotrivial.
\end{prop}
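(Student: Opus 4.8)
The plan is to reduce the statement to the nonvanishing, for the general $t\in B\setminus E$, of the $n$-th iterated cup product $\xi_t^n\colon H^{n,0}(X_t)\to H^{0,n}(X_t)$. Indeed $\xi_t^n$ is the restriction to the general fiber of the sheaf morphism $\phi_n$ (equivalently $\varphi_n$) of Definition \ref{Ki}, and a homomorphism of vector bundles over $B\setminus E$ is nonzero as soon as it is nonzero at a single point; so once $\xi_t^n\neq 0$ is established we get $\phi_n\neq 0$, hence by the observation following Definition \ref{Ki} that $f$ is strongly non isotrivial.

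To produce a $u$ with $\xi_t^n(u)\neq 0$, I would first fix a general $t$ and pick a nonzero class $w$ in the intersection $\xi_t^{n-1}(H^{n,0}(X_t))\cap\overline{\xi_t(H^{n,0}(X_t))}$, which lives in $H^{1,n-1}(X_t)$. By definition of the intersection there are $u,v_0\in H^{n,0}(X_t)$ with $w=\xi_t^{n-1}(u)$ and $w=\overline{\xi_t\cup v_0}$, the second identity being the same as $\xi_t\cup v_0=\overline{w}$. Before invoking Proposition \ref{adjoint} I would record that everything in sight is primitive: $H^{n,0}(X_t)=H^{n,0}_{\textnormal{prim}}(X_t)$, cup product with $\xi_t$ sends primitive classes to primitive classes, so $w\in H^{1,n-1}_{\textnormal{prim}}(X_t)$ and $\xi_t\cup v_0\in H^{n-1,1}_{\textnormal{prim}}(X_t)$, and complex conjugation preserves primitivity, so $\overline{v_0}\in H^{0,n}_{\textnormal{prim}}(X_t)$.

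The heart of the argument is then a one-line Hodge-theoretic computation. Applying Proposition \ref{adjoint} with $[u]:=w$ (so $p=1$, $q=n-1$) and $[v]:=\overline{v_0}\in H^{0,n}_{\textnormal{prim}}(X_t)$, and using $\overline{[v]}=v_0$ together with $\overline{\xi_t\cup v_0}=w$, I obtain
\[
\langle\xi_t\cup w,\overline{v_0}\rangle=\langle w,\overline{\xi_t\cup v_0}\rangle=\langle w,w\rangle=\lVert w\rVert^2>0 .
\]
Since $\xi_t\cup w=\xi_t\cup\xi_t^{n-1}(u)=\xi_t^n(u)$, this shows $\xi_t^n(u)\neq 0$, which completes the reduction.

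The step that I expect to need genuine care is not the displayed identity but the surrounding bookkeeping: checking that the primitivity hypotheses of Proposition \ref{adjoint} hold for every class entering the computation (so that the Hodge pairing is the positive definite one and $\lVert w\rVert^2>0$ is legitimate), being consistent with the conjugations and with the constants $c_n$, $(-1)^q$ built into that proposition, and making sure that this fiberwise statement at a general $t$ really is equivalent to the non-triviality of $\rho_f$ (equivalently $\varphi_n$) over all of $B$ — which I would phrase through the fact that the degeneracy locus of a vector bundle morphism is closed. None of these is serious once the positivity above is in place.
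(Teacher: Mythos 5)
Your argument is correct and is essentially the paper's own proof: the paper invokes the remark after Proposition \ref{adjoint} (that $K^{n-1}_{\textnormal{prim},t}\subset \overline{\xi_t(H^{n,0}_{\textnormal{prim}}(X_t))}^\perp$) to conclude that a nonzero class in the stated intersection cannot be killed by $\xi_t\cup$, which is exactly the adjointness computation $\langle\xi_t\cup w,\overline{v_0}\rangle=\lVert w\rVert^2>0$ that you carry out explicitly. Your bookkeeping on primitivity and on passing from the fiberwise nonvanishing of $\xi_t^n$ to $\phi_n\neq 0$ is also consistent with what the paper does.
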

\begin{proof}
This comes from the fact that all the elements of $K^{n-1}_{\textnormal{prim},t}$ are contained in $\overline{\xi_t(H^{n,0}_{\textnormal{prim}}(X_t))}^\perp$ by the previous remark. Hence $\phi_{n,t}=\xi^n_t\neq 0$.
\end{proof}

In the case of $n=2$, this condition  suggests an interesting numerical bound concerning families of surfaces with positive index.
\begin{thm}\label{indicepositivo}
Let $f\colon X\to B$ be a semistable fibration and denote by $r$ the rank of $K^0$. Assume that the general fiber $X_t$ is a surface satisfying $K^2_{X_t}> 8\chi(\sO_{X_t})+2r+1$, then $f$ is strongly non isotrivial.
\end{thm}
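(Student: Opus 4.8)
The plan is to derive the statement from Proposition \ref{inters} specialized to $n=2$, via a dimension count which, once combined with Noether's formula, turns out to be equivalent to the numerical hypothesis. By Proposition \ref{inters} it suffices to find a general smooth fiber $X_t$ for which
\[
\xi_t\bigl(H^{2,0}(X_t)\bigr)\cap\overline{\xi_t\bigl(H^{2,0}(X_t)\bigr)}\neq\{0\},
\]
the intersection being taken inside $H^{1,1}(X_t)$. Both subspaces in fact lie in the primitive part $H^{1,1}_{\textnormal{prim}}(X_t)$: holomorphic $2$-forms on a surface are automatically primitive, cup product with the Kodaira--Spencer class preserves primitivity (as recalled earlier in this section), and complex conjugation preserves $H^{1,1}_{\textnormal{prim}}(X_t)$. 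Hence the intersection will be nonzero as soon as
\[
2\dim_{\mC}\xi_t\bigl(H^{2,0}(X_t)\bigr)>\dim_{\mC}H^{1,1}_{\textnormal{prim}}(X_t)=h^{1,1}(X_t)-1 .
\]

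First I would compute $\dim_\mC\xi_t(H^{2,0}(X_t))$. By Remark \ref{remarkks}, for general $t\in B$ the fiber of $K^0$ at $t$ is the kernel of $\xi_t\cup\colon H^{2,0}(X_t)\to H^{1,1}(X_t)$; since $K^0$ is a vector bundle on the curve $B$ of rank $r$, this kernel has dimension $r$ at a general point, so by rank--nullity $\dim_\mC\xi_t(H^{2,0}(X_t))=p_g-r$, where $p_g=h^{2,0}(X_t)=h^0(X_t,\omega_{X_t})$. Thus the inequality to be checked becomes $2(p_g-r)>h^{1,1}(X_t)-1$.

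It remains to match this with the hypothesis using classical surface invariants. Put $q=h^{1,0}(X_t)$. Then $b_1(X_t)=2q$ and $b_2(X_t)=2p_g+h^{1,1}(X_t)$, so the topological Euler characteristic is $c_2(X_t)=2-4q+2p_g+h^{1,1}(X_t)$. Combining this with Noether's formula $K^2_{X_t}+c_2(X_t)=12\chi(\sO_{X_t})$ and with $\chi(\sO_{X_t})=1-q+p_g$ gives $h^{1,1}(X_t)=10+10p_g-8q-K^2_{X_t}$. Substituting, $2(p_g-r)>h^{1,1}(X_t)-1$ is equivalent to
\[
K^2_{X_t}>8(p_g-q)+2r+9=8\chi(\sO_{X_t})+2r+1 ,
\]
which is exactly the hypothesis. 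Since for any two subspaces $W_1,W_2$ of a vector space $V$ one has $\dim(W_1\cap W_2)\geq\dim W_1+\dim W_2-\dim V$, the two subspaces above meet nontrivially, and Proposition \ref{inters} yields strong non isotriviality.

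The third paragraph is routine bookkeeping with Chern numbers and Hodge numbers; the only step requiring a little care is the first one, namely that at a general $t$ the fiber of the vector bundle $K^0$ is exactly $\ker(\xi_t\cup)$ on $H^{2,0}(X_t)$ --- a base-change fact recorded in Remark \ref{remarkks} --- so that its dimension equals the generic rank $r$, together with the observation that strong non isotriviality, being the non-vanishing of $\varphi_n$ as a morphism of sheaves, only needs the conclusion of Proposition \ref{inters} at one general fiber. It is also worth noting that working with $H^{1,1}_{\textnormal{prim}}$ rather than with all of $H^{1,1}(X_t)$ is exactly what produces the sharp constant $2r+1$ in the bound, and that the hypothesis forces $2(p_g-r)>h^{1,1}(X_t)-1\geq 0$, hence $p_g>r$ and in particular $\sA\neq 0$.
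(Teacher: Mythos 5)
Your proposal is correct and follows essentially the same route as the paper: reduce to Proposition \ref{inters} for $n=2$, observe that the two subspaces live in $H^{1,1}_{\textnormal{prim}}(X_t)$ and meet whenever $2\dim\xi_t(H^{2,0}(X_t))=2(p_g-r)$ exceeds $h^{1,1}_{\textnormal{prim}}$, and then translate that inequality via $c_2=2-4q+2p_g+h^{1,1}$ and Noether's formula into $K^2_{X_t}>8\chi(\sO_{X_t})+2r+1$. The extra remarks you include (base change for $K^0$ at a general $t$, and that one general fiber suffices) are consistent with the paper's implicit use of Remark \ref{remarkks} and do not change the argument.
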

\begin{proof}
By Proposition \ref{inters}, it is enough to show that the intersection $\xi_t(H^{2,0}(X_t))\cap \overline{\xi_t(H^{2,0}(X_t))}$ is nontrivial in $H^{1,1}_{\text{prim}}(X_t)$. 
This is true if $\dim \xi_t(H^{2,0}(X_t))>h^{1,1}_{\text{prim}}/2$, that is $p_g-r>h^{1,1}_{\text{prim}}/2$. Now since $c_2=2-4q+2p_g+h^{1,1}$ and $h^{1,1}_{\text{prim}}\leq h^{1,1}-1$, by Noether's formula $12\chi(\sO_{X_t})=K_{X_t}^2+c_2$, the above condition is implied by 
$$
2p_g-2r>-2p_g+4q-2+12\chi(\sO_{X_t})-K_{X_t}^2-1
$$ which we can write as 
$$
K^2_{X_t}> 8\chi(\sO_{X_t})+2r+1.
$$
\end{proof}
\begin{rmk}\label{upperb}
	Note that since $9\chi(\sO_{X_t})\leq K^2_{X_t}$ by the  Miyaoka-Yau inequality (\cite{Mi,Y}), our assumption can hold only if $r\leq\frac{p_g-q}{2}$.
\end{rmk}

\subsection{Conditions for triviality of a fibration}

In this last section we look at the opposite problem and give conditions for the triviality of $f\colon X\to B$ over a Zariski open subset of $B$.  So far we have considered the direct image of the relative dualizing sheaf $f_*\omega_{X/B}$, here we will work on the direct image of the relative pluricanonical bundles $f_*(\omega_{X/B}^{\otimes m})$. Hence we briefly recall how to construct a Hermitian metric on the vector bundle associated to $f_*(\omega_{X/B}\otimes L)$, where $L$ is a line bundle on $X$ endowed with a singular metric assumed to be smooth when restricted to the general fiber. We denote by $\psi$ the local weight of this metric and assume that $L$ is semi-positively curved.

Without covering all the details, we just highlight the differences with the untwisted case of Section \ref{sez1}. In particular, on an appropriate open subset of $B$, the metric \ref{norma1} is replaced by
\begin{equation}
	\label{norma3}
	\lVert u_t\lVert^2_t=\int_{X_t} c_n u_t\wedge\bar{u}_te^{-\psi}
\end{equation}
and the curvature Formula \ref{curvatura1} is replaced by 
\begin{equation}
	\label{curvaturatwist}
	\langle\Theta u_t,u_t \rangle_t=f_*(c_ni\partial\bar{\partial}\psi\wedge\bu\wedge\bar{\bu}e^{-\psi})/dV_t+\lVert\eta_t\lVert^2.
\end{equation}  where  $dV_t=idt\wedge d\bar{t}$.

For our purposes we take $L=\omega_{X/B}^{\otimes {m-1}}$ and we prove the following theorem which complements the constancy result proved in \cite{XY}.

\begin{thm}\label{canpol}
	Let $f\colon X\to B$ a family of canonically polarised manifolds. Assume that there exists a surjective morphism $\rho\colon V\times B\to X$, where $V$ is a projective  variety, such that $p:=f\circ \rho$ is the projection on $B$. Then $X$ is Zariski locally trivial.
\end{thm}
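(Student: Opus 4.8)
\emph{Plan and reductions.} The strategy is to squeeze the direct image $\sF:=f_*(\omega_{X/B}^{\otimes m})$, for $m\gg0$, between two opposite estimates: Berndtsson's curvature formula will make $\sF$ semi-positively curved, while the domination $\rho$ will exhibit it as a subsheaf of a trivial bundle, hence of non-positive degree; together these force $\deg\sF=0$, so $\sF$ is Hermitian flat, and then the curvature formula (\ref{curvaturatwist}) forces the Kodaira--Spencer map of $f$ to vanish identically. A final, elementary step then deduces local triviality, in the spirit of \cite[\S2.3]{XY}. First we make harmless reductions. Since the conclusion concerns a Zariski open of $B$ and $f$ is proper, we may replace $V$ by a desingularisation and then by a general linear section of dimension $n=\dim X_t$: the resulting $\rho\colon V\times B\to X$ stays dominant, hence (by properness) surjective, with $f\circ\rho$ still the projection; so we may assume $V$ smooth, $\rho$ generically finite, and each $\rho_t\colon V\to X_t$ surjective. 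We fix $m$ large enough that $|mK_{X_t}|$ is base-point free on every fibre (possible uniformly in $t$ since the family is bounded).

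\emph{Upper bound from the domination.} For the generically finite surjective morphism $\rho$ of smooth projective varieties there is a natural injection of line bundles $\rho^*\omega_X\hookrightarrow\omega_{V\times B}=\omega_V\boxtimes\omega_B$ (the pull-back of top-degree forms is nonzero, being an isomorphism over the \'etale locus, hence injective on the integral scheme $V\times B$); twisting by $p^*\omega_B^{\vee}$, where $p:=f\circ\rho$, this gives $\rho^*\omega_{X/B}\hookrightarrow\mathrm{pr}_V^*\omega_V$ on $V\times B$, with $\mathrm{pr}_V\colon V\times B\to V$. Raising to the $m$-th power, pushing down by $p$, and using $p_*\mathrm{pr}_V^*\omega_V^{\otimes m}=H^0(V,\omega_V^{\otimes m})\otimes_{\mC}\sO_B$, we obtain an injection $p_*(\rho^*\omega_{X/B}^{\otimes m})\hookrightarrow\sO_B^{\oplus N}$ with $N=h^0(V,\omega_V^{\otimes m})$. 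On the other hand, pull-back of sections along $\rho$ gives an injection $\sF=f_*(\omega_{X/B}^{\otimes m})\hookrightarrow p_*(\rho^*\omega_{X/B}^{\otimes m})$, injective because every $\rho_t$ is surjective. Composing, $\sF\hookrightarrow\sO_B^{\oplus N}$, and a coherent subsheaf of a trivial vector bundle on a smooth projective curve has non-positive degree; hence $\deg\sF\le0$.

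\emph{Lower bound, flatness, and vanishing of Kodaira--Spencer.} Equip $L:=\omega_{X/B}^{\otimes(m-1)}$ with the metric induced by the fibrewise K\"ahler--Einstein metrics of the canonically polarised fibres; by Schumacher \cite{Sc} this metric is smooth on $X$, semi-positively curved, and restricts to $(m-1)$ times the K\"ahler--Einstein form on each $X_t$, so the hypotheses under which (\ref{norma3})--(\ref{curvaturatwist}) were set up are met. By \cite{bo}, $\sF=f_*(\omega_{X/B}\otimes L)$ is then Nakano, in particular Griffiths, semi-positively curved, so its first Chern form is non-negative and $\deg\sF\ge0$. With the upper bound this forces $\deg\sF=0$, and a Griffiths semi-positive bundle of degree $0$ on a compact Riemann surface is Hermitian flat (its $c_1$-form, the trace of the non-negative curvature, integrates to $0$, hence vanishes, hence so does the curvature); thus $\Theta_\sF\equiv0$. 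Now for every local holomorphic section $u$ of $\sF$ and every $t$, (\ref{curvaturatwist}) reads
\[
0=\langle\Theta u_t,u_t\rangle_t=f_*\bigl(c_n\,i\partial\bar{\partial}\psi\wedge\bu\wedge\bar{\bu}\,e^{-\psi}\bigr)/dV_t+\lVert\eta_t\lVert^2 ,
\]
and, both summands being non-negative, each vanishes. For the relative K\"ahler--Einstein metric the first summand equals, up to a positive factor, $\int_{X_t}\phi_t\,|u_t|^2$ for the K\"ahler--Einstein volume, where $\phi_t\ge0$ is the geodesic curvature of $\psi$ along the horizontal lift of $\partial/\partial t$ (cf.\ \cite{Sc,BPW}); since $|mK_{X_t}|$ is base-point free, varying $u$ gives $\phi_t\equiv0$ on $X_t$, and Schumacher's fibrewise identity relating $\phi_t$ to $|k_t|^2$ then forces $k_t=0$, i.e.\ $\xi_t=0$, for all $t$. (One may instead invoke directly the link between positivity of $f_*(\omega_{X/B}\otimes L)$ and vanishing of the Kodaira--Spencer class proved in \cite{bo2}.)

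\emph{From $\xi\equiv0$ to local triviality, and the main obstacle.} Vanishing of the Kodaira--Spencer map makes the classifying morphism of the family to the moduli space of canonically polarised manifolds have zero differential everywhere, hence constant; so $f$ is isotrivial, and there is a connected finite \'etale cover $q\colon B'\to B$ with an isomorphism $\Psi\colon X\times_BB'\xrightarrow{\sim}X_{t_0}\times B'$ over $B'$, described by holomorphically varying isomorphisms $\psi_{t'}\colon X_{q(t')}\xrightarrow{\sim}X_{t_0}$. Pulling $\rho$ back, the dominant morphisms $g'_{t'}:=\psi_{t'}\circ\rho_{q(t')}\colon V\to X_{t_0}$ vary holomorphically with $t'\in B'$; since $X_{t_0}$ is of general type, by the Kobayashi--Ochiai finiteness theorem there are only finitely many dominant morphisms $V\to X_{t_0}$, so $g'$ is constant on the connected $B'$. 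Consequently $\psi_{t'_1}\circ\rho_{q(t'_1)}=\psi_{t'_2}\circ\rho_{q(t'_2)}$ whenever $q(t'_1)=q(t'_2)$, and surjectivity of $\rho_{q(t'_i)}$ gives $\psi_{t'_1}=\psi_{t'_2}$; hence $\psi_{t'}$ factors through $q$, and $\Psi$ descends to an isomorphism $X\cong X_{t_0}\times B$ over $B$ (if $f$ has singular fibres, the same reasoning over $B\setminus E$ yields Zariski-local triviality). I expect the vanishing-of-Kodaira--Spencer step to be the main difficulty: the K\"ahler--Einstein twist $L=\omega_{X/B}^{\otimes(m-1)}$ is indispensable there, for it is exactly the extra curvature term in (\ref{curvaturatwist}) that detects the full class $\xi_t$ and not merely its action on $H^{n,0}$ (all that the untwisted formula (\ref{curvatura1}) would record); the algebraic squeeze of the first two steps by itself only delivers flatness of $\sF$, and it is the metric curvature formula, together with Kobayashi--Ochiai, that promotes this to infinitesimal, and then actual, isotriviality.
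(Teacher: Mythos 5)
Your proof is correct and follows the same overall strategy as the paper's: trap $f_*(\omega_{X/B}^{\otimes m})$ between the trivial bundle supplied by the domination from $V\times B$ and Berndtsson-type semi-positivity, conclude that the metric is flat, and then use the twisted curvature formula (\ref{curvaturatwist}) with the relative K\"ahler--Einstein weight to force the Kodaira--Spencer class to vanish. Two sub-steps are implemented differently. For flatness, the paper invokes the Catanese--Fujita--Kawamata decomposition $f_*(\omega_{X/B}^{\otimes m})=\sU\oplus\sA$, observes that the injection into $H^0(V,(\Omega^n_V)^{\otimes m})\otimes\sO_B$ forces $\sA=0$ and $\sU=\sO_B^l$, and then applies P\u aun--Takayama together with \cite[Lemma 13.2]{HPS} to get vanishing curvature of the (a priori singular) semi-positive metric; your degree-zero squeeze is a more elementary route to the same conclusion and is valid as written when $f$ is everywhere smooth, which is the intended reading of ``family of canonically polarised manifolds'', whereas the paper's singular-metric formulation is the one that would survive degenerate fibres. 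For the passage from $\xi\equiv 0$ to Zariski-local triviality, the paper quotes Berndtsson's observation from \cite[Section 4.1]{bo2} that the horizontal lift $V_\psi$ becomes holomorphic, giving analytic local triviality, and then defers to \cite[Page 22]{XY}; your ending via constancy of the moduli map, triviality after a finite \'etale cover, and Kobayashi--Ochiai finiteness of dominant maps to a variety of general type is essentially a self-contained reproduction of that descent, and has the merit of making explicit where the domination by the constant family $V\times B$ is used a second time. Your intermediate reduction of $V$ to a general $n$-dimensional linear section (to get $\rho$ generically finite and the injection $\rho^*\omega_X\hookrightarrow\omega_{V\times B}$) is not needed in the paper's version, which only uses pullback of pluricanonical sections along the surjections $\rho_t$, but it is harmless and correct.
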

\begin{proof}
	By \cite{I,LS}, the direct image $f_*(\omega_{X/B}^{\otimes m})$ has a so called Catanese–Fujita–Kawamata decomposition
	$$
	f_*(\omega_{X/B}^{\otimes m})=\sU\oplus \sA 
	$$ where $\sU$ is unitary flat and $\sA$ is ample (or zero). On the other hand, we also get an injective morphism
	 $f_*(\omega_{X/B}^{\otimes m})\hookrightarrow p_*((\Omega^n_{V\times B/B})^{\otimes m})=H^0(V, (\Omega^n_V)^{\otimes m})\otimes \sO_B$.
	 From this, it is not difficult to see that $\sA=0$ and $\sU=\sO_B^l$ is trivial.
	 
	 Now we write $f_*(\omega_{X/B}^{\otimes m})=f_*(\omega_{X/B}\otimes\omega_{X/B}^{\otimes m-1})$ and take $L=\omega_{X/B}^{\otimes m-1}$.
	 $L$ admits a singular metric which is smooth and with strictly positive curvature when restricted to the smooth fibers; cf. \cite{Sc}. By \cite[Theorem 3.3.5]{PaTa},  the vector bundle $f_*(\omega_{X/B}^{\otimes m})$ admits a singular Hermitian metric with semi-positive curvature.  But by the fact that $f_*(\omega_{X/B}^{\otimes m})$ is trivial, the curvature of this metric cannot be strictly positive definite; actually it is zero, see for example \cite[Lemma 13.2]{HPS}. On the open subset where $f$ is smooth, this metric coincides with the one recalled above in (\ref{norma3}) and by the explicit curvature formula given in \cite[Theorem 1.2]{bo2} and following remarks, the Kodaira-Spencer class vanishes at every point of this open subset. 
	 
	 Moreover, by the argument in \cite[Section 4.1 page 1216]{bo2}, the  Kodaira-Spencer class $\xi_t$ is represented by the form $\bar{\partial}{V_\psi}_{|X_t}$ where $V_\psi$ is a (in principle only smooth) vector field which turns out to be holomorphic since the curvature of $f_*(\omega_{X/B}^{\otimes m})$ is not strictly positive. The fibration $f$ is then analytically locally trivial.
	 
	 To show that $f$ is trivial over a Zariski open subset $U$ of $B$ we can proceed then as in \cite[Page 22]{XY}.
\end{proof}

\end{document}